\newtheorem{theorem}{Theorem}
\title{On compositions of natural numbers}
\author{\it Douglas E. Iannucci}
\date{\small\sc Pungenday, Confusion 7, Year of Our Lady of Discord 3186}
\begin{document}

\maketitle
\begin{abstract}
In this expository note, we introduce the reader to compositions of a natural number, e.g., $2+1+2+1+7+1$ is a composition of~14, and $1+2$ and $2+1$ are two different compositions of~3. We discuss some simple restricted forms of compositions, e.g., $23+17+33$ is a composition of~73 into three odd parts.  We derive formulas that count the number of so restricted forms of compositions of a natural number~$n$, and we conclude with a brief general discussion of the topic.
\end{abstract}

\section{Introduction}\label{intro}
Let~$n$ be a natural number. A {\bf composition of}~$\boldsymbol{n}$ is an ordered sequence of natural numbers whose sum is~$n$. The notation used to indicate a composition may vary from author to author. Here, we take a classical approach, and write a composition of~$n$ as an equation that expresses~$n$ as a sum, e.g., two compositions of~37:
\begin{align*}
37&=11+8+3+7+2+6\\
37&=1+5+1+1+1+17+1+2+2+4\end{align*}
The addends themselves are called the {\bf parts} of the composition. In the first composition of~37 given above, the parts are 11, 8, 3, 7, 2, and~6, in that order. Thus, by our convention, we write a composition of~$n$ in the form
\begin{equation}\label{acform}
n=a_1+a_2+\cdots+a_k,
\end{equation}
where $a_1$, $a_2$, \dots, $a_k$ denote the parts of the composition. It is clear from~\eqref{acform} that a composition of~$n$ may contain as few as one part, that being~$n$ itself, or as many as~$n$ parts, all being unity. 

We stress here that the sequence of parts is {\bf ordered}. For example,
\begin{align*}
17&=4+1+3+4+1+1+3\\
17&=1+1+3+4+3+4+1\end{align*}
give two different compositions of~17. 

Clearly, unity has exactly one composition, $1=1$. Then,
\begin{align*}
2&=2 & 2&=1+1\end{align*}
are the two compositions of~2. How many compositions are there of~3, 4, and 5 respectively? Observe:
\begin{align*}
3&=3 & 3&=1+2 \\
3&=2+1 & 3&=1+1+1\end{align*}
Thus, there are exactly~4 compositions of~3. There are exactly~8 compositions of~4, 
\begin{align*}
4&=4 & 4&=1+3\\
4&=3+1 & 4&=1+2+1\\
4&=2+2 & 4&=1+1+2\\
4&=2+1+1 & 4&=1+1+1+1\end{align*}
and~16 compositions of~5,
\begin{align*}
5&=5 & 5&=1+4\\
5&=4+1 & 5&=1+3+1\\
5&=3+2 & 5&=1+2+2\\
5&=3+1+1 & 5&=1+2+1+1\\
5&=2+3 & 5&=1+1+3\\
5&=2+2+1 & 5&=1+1+2+1\\
5&=2+1+2 & 5&=1+1+1+2\\
5&=2+1+1+1 & 5&=1+1+1+1+1\end{align*}
By now, one likely intuits that there are exactly $2^{n-1}$ compositions of~$n$. This is correct. It is proved easily by induction; we do this in \S~\ref{acsimple}. Note that $2^{n-1}$ is sequence {\tt A000079} in the {\it OEIS}~\cite{oeis}.
 \begin{table}
 \centering
 \begin{tabular}{| l | l | l|} 
 \hline
 Counting function: & Restriction on the compositions of~$n$: & \S:\\
 \hline
 $s_{r,m}(n)$    & $a\equiv r\pmod{m}$ for all parts~$a$. & \ref{recerrm}\\ 
 $s_{r,m,k}(n)$  & Same as $s_{r,m}(n)$, but with exactly~$k$ parts. & \ref{rmksec} \\
 $t_q(n)$        & $a\le q$ for all parts~$a$.  & \ref{restpart} \\
 $u_q(n)$        & $a\ge q$ for all parts~$a$. & \ref{restpart} \\
 $v_{p,q}(n)$     & $p\le a\le q$ for all parts~$a$. & \ref{restpart} \\
 $t_{q,k}(n)$        & Same as $t_{q}(n)$, but with exactly~$k$ parts. & \ref{sizenumber}  \\
 $u_{q,k}(n)$        & Same as $u_{q}(n)$, but with exactly~$k$ parts. & \ref{sizenumber} \\
 $v_{p,q,k}(n)$     & Same as $v_{p,q}(n)$, but with exactly~$k$ parts. & \ref{sizenumber} \\ \hline
  \end{tabular}
  \caption{Counting functions discussed in this note.}
  \label{listofc}
  \end{table}

In this note we derive general formulas for counting the number of compositions with additional restrictions imposed. For example, we'll see in \S~\ref{rmksec} that the number of compositions of~73 comprising exactly three odd parts is the number of the Beast, 666. In Table~\ref{listofc}, we summarize these counting functions. 

\section{The number of compositions of~$\boldsymbol{n}$}\label{acsimple}
Let $C(n)$ denote the number of compositions of~$n$. We prove the well known formula $C(n)=2^{n-1}$ by induction on~$n$, in a natural way suggested by the lexicographic order in which the compositions of 1--5 are given in \S~\ref{intro}.

\begin{theorem}\label{01}
There are precisely $2^{n-1}$ compositions of the natural number~$n$. That is, $C(n)=2^{n-1}$ for all natural numbers~$n$.
\end{theorem}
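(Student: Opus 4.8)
The plan is to prove $C(n)=2^{n-1}$ by induction on $n$, the engine being the recurrence $C(n)=2\,C(n-1)$ valid for all $n\ge 2$. The base case is immediate: since $1=1$ is the only composition of unity, $C(1)=1=2^{0}$. So it suffices to establish the doubling recurrence and then chain it: $C(n)=2\,C(n-1)=2\cdot 2^{\,n-2}=2^{\,n-1}$ by the induction hypothesis.

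For the recurrence, fix $n\ge 2$ and sort the compositions $n=a_1+a_2+\cdots+a_k$ of $n$ into two classes according to the first part $a_1$. In the class with $a_1=1$, deleting the leading part sends $1+a_2+\cdots+a_k$ to $a_2+\cdots+a_k$; because $n\ge 2$ this residual sequence is nonempty, so it is a genuine composition of $n-1$, and prepending a $1$ is a two-sided inverse — hence this class is in bijection with the $C(n-1)$ compositions of $n-1$. In the class with $a_1\ge 2$, decreasing the first part by one sends $a_1+a_2+\cdots+a_k$ to $(a_1-1)+a_2+\cdots+a_k$, again a composition of $n-1$, with inverse given by increasing the first part by one; so this class too is in bijection with the compositions of $n-1$. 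The two classes are disjoint and together exhaust all compositions of $n$, so $C(n)=C(n-1)+C(n-1)=2\,C(n-1)$, completing the induction.

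The only point that needs a moment's care — and it is genuinely the only one — is the boundary bookkeeping in the first class: verifying that after stripping the leading $1$ one is left with at least one part (so the image really is a composition in the sense of this note), which is exactly why the recurrence is claimed only for $n\ge 2$ and $n=1$ is treated by hand; the two maps in each class are visibly mutually inverse and the two classes are visibly a partition, so nothing else requires argument. This is precisely the structure exhibited by the lexicographic listings in \S~\ref{intro}: stripping a leading $1$ from the right-hand column for $n$ reproduces the full list for $n-1$, and subtracting $1$ from the first part of the left-hand column does the same. (If one wanted a non-inductive route instead, one could biject compositions of $n$ with subsets of the $n-1$ gaps between $n$ tally marks, giving $C(n)=2^{\,n-1}$ directly; but the inductive argument above is the one the presentation points toward, so that is the route I would take.)
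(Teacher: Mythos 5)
Your proof is correct, and it is worth noting how it sits relative to the paper. The paper's labeled proof of Theorem~\ref{01} conditions on every possible value of the first part: for each fixed $a_1=a$ with $1\le a<n$ the tail is an arbitrary composition of $n-a$, giving $C(n)=1+\sum_{a=1}^{n-1}C(n-a)$, which is then closed by strong induction and the geometric sum $1+(2^{n-1}-1)=2^{n-1}$. You instead split only on whether $a_1=1$ or $a_1>1$, with the strip-the-leading-$1$ and decrement-the-first-part bijections, obtaining the doubling recurrence $C(n)=2C(n-1)$ and finishing with ordinary induction. Your route is slightly more economical (weak rather than strong induction, no series to sum) and, more importantly, it is exactly the ``alternative proof'' the paper itself presents immediately after Theorem~\ref{01}, precisely because that two-class decomposition is the template that generalizes to $D(n)=D(n-1)+D(n-2)$ for odd parts and to the recurrence for $s_{r,m}(n)$ in \S~\ref{recerrm}; the paper's first proof, by contrast, is the pattern later reused for $t_q(n)$ and $v_{p,q}(n)$, where one really does sum over all admissible first parts. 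Your care about the boundary case (that stripping the leading $1$ leaves a nonempty composition, which is why $n\ge2$ is needed) is exactly the right point to flag, and your parenthetical tally-marks-and-gaps bijection is the standard direct, non-inductive proof of the same count.
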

\begin{proof}
Clearly $C(1)=1$. Consider $n>1$. In any composition of~$n$, the first part, denoted by~$a_1$ in~\eqref{acform}, has the property $1\le a_1\le n$. If $a_1=n$, then $n=a_1$ is itself a composition of~$n$. Otherwise $k>1$. 

Consider, then, each fixed value of~$a_1$, $1\le a_1<n$. Let $a=a_1$. There are exactly $C(n-a)$ compositions
$$n-a=a_2+a_3+\cdots+a_k.$$
Thus
$$C(n)=1+\sum_{a=1}^{n-1}C(n-a).$$
By induction hypothesis,
$$C(n)=1+\sum_{a=1}^{n-1}2^{a-1}=1+(2^{n-1}-1)=2^{n-1},$$
hence the result.
\end{proof}

We seek to generalize. How may we count the number of compositions of~$n$ where all the parts are odd? How about when the number of parts is fixed? How about when the parts are bounded?

It is easier to obtain such generalizations if we use a slightly different method of induction. Before proving the general formula, we first illustrate this method by providing an alternative proof to Theorem~\ref{01}.

It begins the same way: $C(1)=1$, hence we consider $n>1$. A composition of~$n$, as in~\eqref{acform}, either has the property that $a_1=1$ or $a_1>1$. Let~$A$ denote the set of compositions of~$n$ such that $a_1=1$, and let~$B$ denote the set of compositions of~$n$ such that $a_1>1$. Note that~$A$ and~$B$ are disjoint. Thus, 
\begin{equation}\label{decab}
C(n)=\#A+\#B,
\end{equation}
where $\#S$ denotes the number of elements in a finite set~$S$.

For all compositions in~$A$, we may write
\begin{equation}\label{1plus}
n-1=a_2+a_3+\cdots+a_k,
\end{equation}
while for all compositions in~$B$, we may write
\begin{equation}\label{minus1}
n-1=(a_1-1)+a_2+a_3+\cdots+a_k.
\end{equation}
The sum in~\eqref{1plus} is a  composition of~$n-1$; therefore $\#A=2^{n-2}$ by induction hypothesis. The sum in~\eqref{minus1} is also a composition of~$n-1$, where $a_1-1$ is the first part; thus, similarly, $\#B=2^{n-2}$. Hence by~\eqref{decab} $C(n)=2^{n-2}+2^{n-2}=2^{n-1}$.

The key step here was establishing the recurrence 
$$C(n)=C(n-1)+C(n-1),$$
which in this case simplifies to $C(n)=2C(n-1)$. We can alter this key step slightly to answer the question: how many  compositions on~$n$ are there such that all the parts are all odd? Let us denote this number by $D(n)$. Clearly $D(1)=1$, and $D(2)=1$ because $2=1+1$ is the only way to compose~2 into odd parts. Thus, we may assume $n>2$. Again, we may let~$A$ denote the set of compositions of~$n$ with all odd parts, such that~$a_1=1$. Again, we may let~$B$ denote the set of compositions of~$n$ with all odd parts, such that~$a_1>1$ (hence $a_1>2$). Again, $A$ and~$B$ are disjoint, so that $D(n)=\#A+\#B$.

For all compositions in~$A$, we may write
\begin{equation}\label{1plusodd}
n-1=a_2+a_3+\cdots+a_k,
\end{equation}
and for all compositions in~$B$, we may write
\begin{equation}\label{minus2}
n-2=(a_1-2)+a_2+a_3+\cdots+a_k.
\end{equation}
Similarly as before, the sum in~\eqref{1plusodd} is a composition of $n-1$ into odd parts. Likewise, the sum in~\eqref{minus2} is a composition of $n-2$ into odd parts, where $a_1-2$ is the first part. Thus $\#A=D(n-1)$, $\#B=D(n-2)$, hence
\begin{equation*}
D(n)=D(n-1)+D(n-2).
\end{equation*}
Therefore
$$D(1)=D(2)=1,\qquad D(n)=D(n-1)+D(n-2)\quad(n>2).$$
This defines $D(n)$ as the Fibonacci sequence ({\tt A000045} in the {\it OEIS\/}); i.e.,
$$D(n)=F_n.$$
We give here the compositions of~$n$ into odd parts for $n=3$, 4, and~5:
\begin{align*}
3&=3     &    4&=3+1   &   5&=5\\
3&=1+1+1 &    4&=1+3  &   5&=3+1+1\\
  &       &    4&=1+1+1+1& 5&=1+3+1\\
   &      &     &        & 5&=1+1+3\\
  &       &     &        & 5&=1+1+1+1+1\end{align*}
 
\section{The recurrence for $\boldsymbol{s_{r,m}(n)}$}\label{recerrm}
We introduce here the notation $s_{r,m}(n)$ to represent the number of compositions of~$n$ into parts, all of which are congruent to~$r$ modulo~$m$. Thus we assume that $0<r\le m$. Furthermore, without loss of generality, we may assume that~$r$ and~$m$ are relatively prime. For, otherwise $m=d\mu$ and $r=d\rho$ for natural numbers~$\mu$, $\rho$, and~$d$, where $d>1$ and $\gcd(\mu,\rho)=1$. Thus, all the parts $a_j$, as in~\eqref{acform}, of a composition of~$n$ that is counted by $s_{r,m}(n)$ have the form
$$a_j=mb_j+r=d(\mu b_j+\rho).$$
Letting $c_j=\mu b_j+\rho$, such a composition on~$n$ has the form
$$n=d\left(c_1+c_2+\cdots +c_k\right),$$
hence $n=d\nu$ for some natural number~$\nu$. Hence
$$\nu=c_1+c_2+\cdots +c_k,$$
which is a composition of~$\nu$ counted by $s_{\rho,\mu}(\nu)$. Therefore $s_{m,r}(n)=s_{\rho,\mu}(\nu)$.

According to our notation, Theorem~\ref{01} states that $s_{1,1}(n)=2^{n-1}$. In \S~\ref{acsimple} we showed that $s_{1,2}(n)=F_n$. By mirroring our remarks in \S~\ref{acsimple}, we can establish a recursive definition for $s_{r,m}(n)$ in general:

\begin{theorem}\label{rm}
Given $0<r\le m$ and $\gcd(r,m)=1$, if $1\le n\le m$ then
\begin{equation}\label{thmsrm}
s_{r,m}=\begin{cases}1,&\text{if $r\mid n$,}\\
                       0,&\text{if $r\nmid n$.}\end{cases}
                       \end{equation}
If $n>m$ then $s_{r,m}(n)=s_{r,m}(n-r)+s_{r,m}(n-m)$.
\end{theorem}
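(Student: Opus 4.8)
The plan is to argue directly and combinatorially, exactly mirroring the treatment of $D(n)=s_{1,2}(n)$ in \S\ref{acsimple}; no closed form and no induction are needed, since the statement only asserts the base values and the recurrence. The one structural fact used throughout is that the smallest positive integer congruent to~$r$ modulo~$m$ is~$r$ itself: the smaller candidates $r-m,r-2m,\dots$ are all nonpositive because $r\le m$. Hence in any composition $n=a_1+a_2+\cdots+a_k$ counted by $s_{r,m}(n)$, every part satisfies $a_j\ge r$, and of course $a_j\le n$.

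For the base case $1\le n\le m$, I would observe that each part $a_j$ lies in $[r,n]\subseteq[1,m]$ and is congruent to~$r$ modulo~$m$; the only integer in that range with this property is~$r$ itself, since the next one, $r+m$, exceeds $m\ge n$. So every composition counted by $s_{r,m}(n)$ has the form $n=\underbrace{r+r+\cdots+r}_{k}$, forcing $n=kr$. Thus there is exactly one such composition when $r\mid n$ (take $k=n/r$) and none when $r\nmid n$, which is precisely~\eqref{thmsrm}.

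For the recurrence, assume $n>m$. Following \S\ref{acsimple}, let~$A$ be the set of compositions of~$n$ into parts congruent to~$r$ modulo~$m$ with $a_1=r$, and let~$B$ be the set of those with $a_1>r$; since every first part is~$\ge r$, the sets $A$ and~$B$ are disjoint and together exhaust all compositions counted by $s_{r,m}(n)$, so $s_{r,m}(n)=\#A+\#B$. A composition in~$A$ must have $k\ge2$ (else $n=a_1=r\le m$, contradicting $n>m$), so deleting its first part yields
\begin{equation*}
n-r=a_2+a_3+\cdots+a_k,
\end{equation*}
a composition of $n-r$ into parts congruent to~$r$ modulo~$m$; prepending a part equal to~$r$ inverts this, so $\#A=s_{r,m}(n-r)$. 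A composition in~$B$ has $a_1>r$ with $a_1\equiv r\pmod m$, hence $a_1\ge r+m$, so $a_1-m\ge r$ and $a_1-m\equiv r\pmod m$; replacing $a_1$ by $a_1-m$ yields
\begin{equation*}
n-m=(a_1-m)+a_2+a_3+\cdots+a_k,
\end{equation*}
a composition of $n-m$ into parts congruent to~$r$ modulo~$m$, and adding~$m$ back to the first part inverts this, so $\#B=s_{r,m}(n-m)$. Combining, $s_{r,m}(n)=s_{r,m}(n-r)+s_{r,m}(n-m)$.

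The only place demanding a little care — the ``main obstacle,'' such as it is — is verifying that these two maps really are bijections onto the \emph{full} sets of compositions of $n-r$ and $n-m$: one must check $n-r\ge1$ and $n-m\ge1$ (both immediate from $n>m\ge r$) so that these are honest compositions, confirm that elements of~$A$ genuinely have a second part to expose, and note that the reverse maps always land back in~$A$ and~$B$ respectively, since a first part $b_1\ge r$ becomes $b_1+m>r$. Note that the hypothesis $\gcd(r,m)=1$ is not actually used in this argument; it is simply inherited from the reduction carried out before the theorem.
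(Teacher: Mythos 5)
Your proposal is correct and follows essentially the same route as the paper: the same observation that $r$ is the only admissible part not exceeding $m$ for the base case, and the same decomposition into the sets $A$ ($a_1=r$) and $B$ ($a_1>r$, hence $a_1\ge m+r$) with the maps $n\mapsto n-r$ and $n\mapsto n-m$ giving $\#A=s_{r,m}(n-r)$ and $\#B=s_{r,m}(n-m)$. Your extra checks (that compositions in $A$ have $k\ge2$, that the maps are bijections, and that $\gcd(r,m)=1$ is not actually needed here) are sound refinements of the paper's argument rather than a different approach.
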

\begin{proof}
The only possible part~$a$, such that $a\le m$ and $a\equiv r\pmod{m}$, is~$a=r$. Thus the only possible  compositions of~$n$, with parts congruent to~$r$ modulo~$m$, such that $n\le m$ are 
$$ n=r,\quad n=r+r,\quad n=r+r+r,\quad\dots,\quad n=\underbrace{r+r+\cdots+r}_{\hbox{\text{$[m/r]$ parts}}}.$$
Thus~\eqref{thmsrm} holds when $1\le n\le m$. 

Otherwise $m>n$. For any composition of~$n$ counted by $s_{r,m}(n)$, the first part~$a_1$ has either the property $a_1=r$ or $a_1>r$. Let~$A$ denote those compositions such that $a_1=r$, and let~$B$ denote those such that $a_1>r$ (hence $a_1\ge m+r$). Then~$A$ and~$B$ are disjoint, hence
\begin{equation}\label{abagain}
s_{r,m}(n)=\#A+\#B.
\end{equation}
Each composition in~$A$ may be written as
\begin{equation}\label{amore}
n-r=a_2+a_3+\cdots+a_k,
\end{equation}
while each composition in~$B$ has the form
\begin{equation}\label{bmore}
n-m=\left(a_1-m\right)+a_2+\cdots+a_k.
\end{equation}
The sum in~\eqref{amore} is a composition of~$n-r$ such that all parts are congruent to~$r$ modulo~$m$. The sum in~\eqref{bmore} is a composition of~$n-m$ such that all parts are congruent to~$r$ modulo~$m$. Therefore $\#A=s_{r,m}(n-r)$ and $\#B=s_{r,m}(n-m)$, hence by~\eqref{abagain}
$$s_{r,m}(n)=s_{r,m}(n-r)+s_{r,m}(n-m).$$
\end{proof}

Note that Theorem~\ref{rm} subsumes Theorem~\ref{01}. Indeed, by Theorem~\ref{rm},
$$s_{1,1}(1)=1, \qquad s_{1,1}(n)=2s_{1,1}(n-1)\quad(n>1),$$
which, as mentioned before, defines the sequence $2^{n-1}$. Theorem~\ref{rm} then implies
$$s_{1,2}(1)=s_{1,2}(2)=1,\qquad s_{1,2}(n)=s_{1,2}(n-1)+s_{1,2}(n-1)\quad(n>2),$$
which, as seen, defines the Fibonacci sequence~$F_n$. Furthermore, we have
$$s_{1,3}(1)=s_{1,3}(2)=s_{1,3}(3)=1,\qquad s_{1,3}(n)=s_{1,3}(n-1)+s_{1,3}(n-3)\quad(n>3),$$
which defines Narayana's cows sequence ({\tt A000930} in the {\it OEIS\/}). Similarly,
\begin{gather*}
s_{2,3}(1)=0,\qquad s_{2,3}(2)=1,\qquad s_{2,3}(3)=0,\\
s_{2,3}(n)=s_{2,3}(n-2)+s_{2,3}(n-3)\quad(n>3)
\end{gather*}
defines the Padovan sequence ({\tt A000931} in the {\it OEIS\/}), where the sequence elements $a(n)$ in {\tt A000931} are enumerated such that $s_{2,3}(n)=a(n+1)$.

\begin{table}
\centering
 \begin{tabular}{|lll|lll|} 
 \hline
 $m$ & $s_{1,m}(n)$ &in {\it OEIS} seq. &  $m$ & $s_{1,m}(n)$ &in {\it OEIS} seq.\\
 \hline
 1 & $a(n)$ &{\tt A000079} & 5 & $a(n-1)$ &{\tt A003520} \\ 
 2 & $a(n)$ &{\tt A000045} & 6 & $a(n-1)$ &{\tt A005708} \\
 3 & $a(n-1)$ &{\tt A000930} & 7 & $a(n-1)$ &{\tt A005709} \\
 4 & $a(n)$ &{\tt A003269} & 8 & $a(n-1)$ &{\tt A005710} \\
 \hline
 \end{tabular}
 \caption{Some sequences of form $s_{1,m}(n)$.}
 \label{1m}
\end{table}

Several sequences of the form $s_{1,m}(n)$ (that is, with $r=1$) have been catalogued in the {\it OEIS\/}, as seen in Table~\ref{1m}.

The closed form expression $s_{1,1}(n)=2^{n-1}$, as seen, is easily deduced. The closed form expression
\begin{equation}\label{faux}
s_{1,2}(n)=\frac{\alpha^n-\beta^n}{\alpha-\beta},\qquad\alpha=\frac{1+\sqrt5}2,\qquad\beta=\frac{1-\sqrt5}2,
\end{equation}
is known as Binet's formula, i.e.,
\begin{equation}\label{Binet}
F_n=\frac{\alpha^n-\beta^n}{\alpha-\beta}.
\end{equation}
Then~\eqref{faux} may be deduced by means of a generating series, which we
discuss briefly in \S\ref{gen}. 

When $m>2$, it becomes difficult to obtain a closed form for the sequence $s_{r,m}(n)$, as the order of recurrence exceeds~2. This, too, is discussed briefly in \S\ref{gen}. 
\section{Generating functions}\label{gen}
Given a sequence $a(n)$, $n\ge1$, we define its {\bf generating function} by
$$g(x)=\sum_{n=1}^{\infty}a(n)x^n.$$
This series is treated formally. Thus, we may obtain a closed form for the generating function as a means of cataloguing the sequence. The generating function for $s_{r,m}(n)$ is easily obtained. 

\begin{theorem}\label{genrm}
The generating function for $s_{r,m}(n)$ is given by
$$g(x)=\sum_{n=1}^{\infty}s_{r,m}(n)x^n=\frac{x^r}{1-x^r-x^m}.$$
\end{theorem}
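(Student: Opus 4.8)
The plan is to set up a combinatorial identity for the generating function $g(x)=\sum_{n\ge 1}s_{r,m}(n)x^n$ directly from the recurrence in Theorem~\ref{rm}, rather than by the ``product over parts'' viewpoint. First I would handle the initial segment $1\le n\le m$ using the explicit formula~\eqref{thmsrm}: the only compositions counted there are $n=\underbrace{r+\cdots+r}_{j}$ for $1\le j\le \lfloor m/r\rfloor$, so the contribution of these terms to $g(x)$ is exactly $x^r+x^{2r}+\cdots+x^{\lfloor m/r\rfloor\, r}$, a finite geometric sum. For $n>m$ the recurrence $s_{r,m}(n)=s_{r,m}(n-r)+s_{r,m}(n-m)$ applies.

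Next I would multiply the recurrence by $x^n$ and sum over $n>m$. The left side is $g(x)$ minus its first $m$ terms. On the right side, $\sum_{n>m}s_{r,m}(n-r)x^n = x^r\sum_{n>m-r}s_{r,m}(n)x^n$ and $\sum_{n>m}s_{r,m}(n-m)x^n = x^m\sum_{n\ge 1}s_{r,m}(n)x^n = x^m g(x)$. The bookkeeping here is the main thing to get right: I must carefully track which low-order terms of $g$ are included or excluded in each shifted sum, and verify — using~\eqref{thmsrm} again — that the ``boundary'' terms $s_{r,m}(n)$ for $m-r<n\le m$ and for $n\le m$ combine exactly with the finite geometric sum from the first step so that everything telescopes into the clean relation $g(x)\bigl(1-x^r-x^m\bigr)=x^r$. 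I expect this boundary reconciliation to be the only real obstacle; it is purely a finite computation, but it is where an off-by-one error would hide.

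An alternative, arguably cleaner, route that I might present instead: argue directly that a composition of $n$ into parts $\equiv r\pmod m$ is an ordered sequence of parts from the set $\{r, r+m, r+2m,\dots\}$, so
$$
g(x)=\sum_{k\ge 1}\Bigl(x^r+x^{r+m}+x^{r+2m}+\cdots\Bigr)^{\!k}
 =\sum_{k\ge 1}\left(\frac{x^r}{1-x^m}\right)^{\!k}
 =\frac{x^r/(1-x^m)}{1-x^r/(1-x^m)}
 =\frac{x^r}{1-x^m-x^r},
$$
where the geometric-series manipulations are valid in the ring of formal power series because every factor has zero constant term. I would then remark that this agrees with what the recurrence in Theorem~\ref{rm} predicts, which serves as a consistency check. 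Whichever presentation I choose, I would close by noting the specializations: $r=m=1$ gives $x/(1-2x)$, recovering $2^{n-1}$, and $r=1,m=2$ gives $x/(1-x-x^2)$, the standard generating function for the Fibonacci numbers $F_n$, consistent with~\eqref{Binet}.
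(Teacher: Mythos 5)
Your second, ``sum over the number of parts'' argument is correct and complete, and it is genuinely different from the paper's proof. The paper proceeds exactly along your first route: it multiplies the recurrence of Theorem~\ref{rm} by $x^n$, sums over $n>m$, and reconciles the boundary terms, using the identity
$$x^r\sum_{n=m+1-r}^{\infty}s_{r,m}(n)x^n=x^rg(x)-\left(x^{2r}+x^{3r}+\cdots+x^{\left[\frac{m}{r}\right]r}\right),$$
so that the finite geometric sum $X=x^r+x^{2r}+\cdots+x^{\left[\frac{m}{r}\right]r}$ cancels and $g(x)=(x^r+x^m)g(x)+x^r$ drops out. Note that as written, your first route is only a plan: the boundary reconciliation you flag as ``where an off-by-one error would hide'' is precisely the computation the paper carries out, so if you choose that presentation you must actually do it. Your alternative route sidesteps all of that: viewing a composition as an ordered sequence of parts drawn from $\{r,r+m,r+2m,\dots\}$ gives $g(x)=\sum_{k\ge1}\bigl(x^r/(1-x^m)\bigr)^k$ directly, with no recurrence and no boundary cases, and the formal-power-series justification (zero constant term, orders tending to infinity) is exactly the right remark. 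What each approach buys: the paper's method is the template it reuses repeatedly later (Theorems~\ref{tgenf}, \ref{ugenf}, \ref{toughy}), so proving Theorem~\ref{genrm} that way doubles as a warm-up for those harder bookkeeping arguments and exploits the already-established Theorem~\ref{rm}; your direct argument is shorter, self-contained (it does not even need Theorem~\ref{rm}, and in fact reproves it, since $x^r/(1-x^r-x^m)=x^r+(x^r+x^m)g(x)$ encodes the recurrence), and generalizes immediately to parts drawn from an arbitrary set $A$, which is the Heubach--Mansour viewpoint the paper mentions in its concluding section. Your closing consistency checks ($r=m=1$ and $r=1$, $m=2$) match the paper's own specializations.
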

\begin{proof}
For ease of notation, let
$$X=x^r+x^{2r}+\cdots+x^{\left[\frac{m}{r}\right]r}.$$
Then, applying Theorem~\ref{rm}, 
\begin{align*}
g(x)&=\sum_{n=1}^{\infty}s_{r,m}(n)x^n\\
&=X+\sum_{n=m+1}^{\infty}s_{r,m}(n-r)x^n+\sum_{n=m+1}^{\infty}s_{r,m}(n-m)x^n\\
&=X+x^r\sum_{n=m+1-r}^{\infty}s_{r,m}(n)x^n+x^m\sum_{n=1}^{\infty}s_{r,m}(n)x^n\\
&=X+x^rg(x)-(X-x^r)+x^mg(x),\end{align*}
where the fourth line follows because
\begin{align*}
x^r\sum_{n=m+1-r}^{\infty}s_{r,m}(n)x^n&=x^r\left(g(x)-\left(x^r+x^{2r}+\cdots+x^{\left[\frac{m-r}{r}\right]r}\right)\right)\\
&=x^rg(x)-\left(x^{2r}+x^{3r}+\cdots+x^{\left[\frac{m}{r}\right]r}\right).\end{align*}
Hence 
$$g(x)=(x^r+x^m)g(x)+x^r.$$
Solving for $g(x)$ yields the result.
\end{proof}

Theorem~\ref{genrm} gives $s_{1,1}(n)=2^{n-1}$ immediately by dint of the geometric sum formula, and then simply by matching coefficients:
$$\sum_{n=1}^{\infty}s_{1,1}(n)x^n=\frac{x}{1-2x}=\sum_{n=1}^{\infty}2^{n-1}x^{n}.$$
Likewise, Theorem~\ref{genrm} gives~\eqref{faux} by using Binet's method. For, partial fraction decomposition yields
$$\frac{x}{1-x-x^2}=\frac1{\sqrt5}\left(\frac1{1-\alpha x}-\frac1{1-\beta x}\right).$$
Thus, by the geometric sum formula,
$$\frac{x}{1-x-x^2}=\frac1{\sqrt5}\sum_{n=0}^{\infty}\alpha^nx^n-\frac1{\sqrt5}\sum_{n=0}^{\infty}\beta^nx^n.$$
We note that $\alpha-\beta=\sqrt5$. Thus by Theorem~\ref{rm}
\begin{equation}\label{Fibx}
\sum_{n=1}^{\infty}s_{1,2}(n)x^n=\sum_{n=1}^{\infty}\frac{\alpha^n-\beta^n}{\alpha-\beta}x^n,
\end{equation}
and we are left to match the coefficients, hence the result.

What happens when we look at $s_{1,3}(n)$? We may factor
$$1-x-x^3=\left((s+t)-x\right)\left((1+(s+t)^2)+(s+t)x+x^2\right),$$
where 
$$s=\sqrt[3]{\frac12+\sqrt{\frac{31}{108}}}\;,\qquad t=\sqrt[3]{\frac12-\sqrt{\frac{31}{108}}}\;.$$
This factorization becomes evident when considering that $3st=-1$ and $s^3+t^3=1$. Letting $u=s+t$, we now write
$$1-x-x^3=\left(u-x\right)\left((1+u^2)+ux+x^2\right).$$

The linear factor $u-x$ introduces a real root of the cubic equation $1-x-x^3=0$, while the quadratic factor $(1+u^2)+ux+x^2$ introduces two complex conjugate roots. Ordinary partial fraction decomposition takes the form
$$\frac{x}{1-x-x^3}=\frac1{1+3u^2}\left(\frac{u}{u-x}-\frac{(1+u^2)-ux}{(1+u^2)+ux+x^2}\right).$$
The geometric sum formula yields
$$\frac{u}{u-x}=\sum_{n=0}^{\infty}\frac{x^n}{u^n}.$$
Then, further partial fraction decomposition using complex numbers yields
\begin{equation}\label{111}
\frac{(1+u^2)-ux}{(1+u^2)+ux+x^2}=\frac{Au\gamma}{1-u\gamma x}+\frac{Bu\beta}{1-u\beta x},
\end{equation}
where, writing $v=s-t$,
$$A=\frac{u}2-\frac{\sqrt3(u^2+v^2)}{4v}i,\quad B=\frac{u}2+\frac{\sqrt3(u^2+v^2)}{4v}i,$$
and
$$\beta=-\frac{u}2-\frac{\sqrt3v}2i,\qquad \gamma=-\frac{u}2+\frac{\sqrt3v}2i.$$
Applying the geometric sum formula to~\eqref{111},
$$\frac{(1+u^2)-ux}{(1+u^2)+ux+x^2}=Au\gamma\sum_{n=0}^{\infty}u^n\gamma^nx^n+Bu\beta\sum_{n=0}^{\infty}u^n\beta^nx^n.$$
Thus, as $1-Au\gamma-Bu\beta=0$, we have
$$\frac{x}{1-x-x^3}=\frac1{1+3u^2}\sum_{n=1}^{\infty}\left(\frac1{u^n}-(A\gamma^{n+1}+B\beta^{n+1})u^{n+1}\right)x^n,$$
hence by Theorem~\ref{rm}, and by matching coefficients,
$$s_{1,3}(n)=\frac{1-\left(A\gamma^{n+1}+B\beta^{n+1}\right)u^{2n+1}}{u^n(1+3u^2)}.$$
This result is rather esoteric in nature, and of not much practical use. Furthermore, for larger cases of~$m$, it is likely impossible to extract a closed formula for $s_{r,m}(n)$, especially in light of Abel's theorem regarding the insolubility by radicals of the general polynomial equation of degree~5 or larger.

However, all is not lost. In \S~\ref{rmksec}, we generalize $s_{r,m}(n)$ a bit more. In this case, we see that the generating function works perfectly well in obtaining a closed form.

\section{The sequence $\boldsymbol{s_{r,m,k}(n)}$}\label{rmksec}
We now fix $k\ge1$ and let $s_{r,m,k}(n)$ denote the number of compositions of~$n$ into exactly~$k$ parts, such that all the parts are congruent to~$r$ modulo~$m$. As before we assume $\gcd(r,m)=1$ and $0<r\le m$. It follows that $s_{r,m,k}(n)=0$ whenever $k>n$. Trivially,
\begin{equation}\label{k1}
s_{r,m,1}(n)=\begin{cases}
1,&\text{if $n\equiv r\pmod{m}$,}\\
0,&\text{otherwise.}\end{cases}
\end{equation}

A nontrivial example is $s_{3,17,4}(63)=20$:
\begin{align*}
63 &= 54 + 3 + 3 + 3 & 63 &= 3 + 54 + 3 + 3 \\
63 &= 37 + 20 + 3 + 3 & 63 &= 3 + 37 + 20 + 3 \\
63 &= 37 + 3 + 20 + 3 & 63 &= 3 + 37 + 3 + 20 \\
63 &= 37 + 3 + 3 + 20 & 63 &= 3 + 20 + 37 + 3 \\
63 &= 20 + 37 + 3 + 3 & 63 &= 3 + 20 + 20 + 20 \\
63 &= 20 + 20 + 20 + 3 & 63 &= 3 + 20 + 3 + 37 \\
63 &= 20 + 20 + 3 + 20 & 63 &= 3 + 3 + 54 + 3 \\
63 &= 20 + 3 + 37 + 3 & 63 &= 3 + 3 + 37 + 20 \\
63 &= 20 + 3 + 20 + 20 & 63 &= 3 + 3 + 20 + 37 \\
63 &= 20 + 3 + 3 + 37 & 63 &= 3 + 3 + 3 + 54 \end{align*}
The lexicographic order in which these compositions are listed suggests a recurrence in the sequence $s_{r,m,k}(n)$. For, considering~\eqref{acform}, it follows that either $a_1=r$ or $a_1>r$. By dint of~\eqref{k1}, we may assume $k>1$. If $n\le m$, then $a_j\le m$ for all~$j$; hence $a_j=r$ for all~$j$. Thus
\begin{equation}\label{rmkinit}
s_{r,m,k}(n)=\begin{cases}
1,&\text{if $n=rk$,}\\
0,&\text{otherwise,}
\end{cases}
\end{equation}
whenever $n\le m$. Similarly,
\begin{equation}\label{rmkinit1}
s_{r,m,k-1}(n)=\begin{cases}
1,&\text{if $n=r(k-1)$,}\\
0,&\text{otherwise,}
\end{cases}
\end{equation}
whenever $n\le m$. 

Otherwise $n>m$. Letting~$A$ denote the set of such compositions where $a_1=r$, and~$B$ the set where $a_1>r$, we see that $s_{r,m,k}=\#A+\#B$. Each member of~$A$ has the form
$$n-r=a_2+a_3+\cdots+a_k,$$
hence $\#A=s_{r,m,k-1}(n-r)$. Likewise, each member of~$B$ has the form
$$n-m=(a_1-m)+a_2+a_3+\cdots+a_k,$$
hence $\#B=s_{r,m,k}(n-m)$. Therefore
\begin{equation}\label{rmkrecur}
s_{r,m,k}(n)=s_{r,m,k-1}(n-r)+s_{r,m,k}(n-m)
\end{equation}
whenever $n>m$. 

Let $g_k(x)$ denote the generating function of the sequence $s_{r,m,k}(n)$. Thus,
\begin{equation}\label{rmkgen1}
g_k(x)=\sum_{n=1}^{m}s_{r,m,k}(n)x^n+\sum_{n=m+1}^{\infty}s_{r,m,k}(n)x^n.
\end{equation}
We remark here, by~\eqref{rmkinit} and~\eqref{rmkinit1} we have
$$\sum_{n=1}^ms_{r,m,k}(n)x^n=\begin{cases}
x^{rk},&\text{if $m\ge rk$,}\\
0,&\text{if $m<rk$,}
\end{cases}$$
and,
$$x^r\sum_{n=1}^{m-r}s_{r,m,k-1}(n)x^n=\begin{cases}
x^{rk},&\text{if $m\ge rk$,}\\
0,&\text{if $m<rk$.}
\end{cases}$$
Therefore
\begin{equation}\label{sumxout}
\sum_{n=1}^{m}s_{r,m,k}(n)x^n-x^r\sum_{n=1}^{m-r}s_{r,m,k-1}(n)x^n=0.
\end{equation}

We may now obtain here the generating function $g_k(x)$. 

\begin{theorem}\label{genrmk}
For all~$k\ge1$ we have
$$g_k(x)=\sum_{n=1}^{\infty}s_{r,m,k}(n)x^n=\frac{x^{rk}}{\left(1-x^m\right)^k}.$$
\end{theorem}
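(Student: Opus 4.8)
The plan is to prove the formula by induction on~$k$, feeding the recurrence~\eqref{rmkrecur} and the auxiliary cancellation~\eqref{sumxout} into the generating function $g_k(x)$ defined in~\eqref{rmkgen1}. For the base case $k=1$, equation~\eqref{k1} says $s_{r,m,1}(n)=1$ exactly when $n\equiv r\pmod m$ and is~$0$ otherwise, so $g_1(x)=\sum_{j\ge0}x^{r+jm}=x^r/(1-x^m)$ by the geometric series, which is the asserted formula when $k=1$.

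For the inductive step I would assume $g_{k-1}(x)=x^{r(k-1)}/(1-x^m)^{k-1}$ and start from the split in~\eqref{rmkgen1}, applying~\eqref{rmkrecur} to the tail $\sum_{n=m+1}^{\infty}s_{r,m,k}(n)x^n$. The contribution of the term $s_{r,m,k}(n-m)$ reindexes to $x^m g_k(x)$, while the contribution of the term $s_{r,m,k-1}(n-r)$ reindexes to $x^r\bigl(g_{k-1}(x)-\sum_{n=1}^{m-r}s_{r,m,k-1}(n)x^n\bigr)$. The surviving finite pieces — the head $\sum_{n=1}^{m}s_{r,m,k}(n)x^n$ from~\eqref{rmkgen1} and the subtracted $x^r\sum_{n=1}^{m-r}s_{r,m,k-1}(n)x^n$ — are exactly the two expressions that~\eqref{sumxout} declares to be equal, so they cancel. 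What is left is the functional equation $g_k(x)=x^r g_{k-1}(x)+x^m g_k(x)$, i.e. $g_k(x)=\dfrac{x^r}{1-x^m}\,g_{k-1}(x)$, and substituting the induction hypothesis gives $g_k(x)=x^{rk}/(1-x^m)^k$.

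I expect the only delicate point to be the index bookkeeping when multiplying a shifted generating function by $x^r$: one must notice that $x^r\sum_{n=m+1}^{\infty}s_{r,m,k-1}(n-r)x^n=x^r\sum_{n=m+1-r}^{\infty}s_{r,m,k-1}(n)x^n$ starts at $n=m+1-r$, not at $n=1$, and then account for the missing low-order monomials. But the paper has already isolated precisely this discrepancy — the remarks just before the theorem evaluate both $\sum_{n=1}^{m}s_{r,m,k}(n)x^n$ and $x^r\sum_{n=1}^{m-r}s_{r,m,k-1}(n)x^n$ as either $x^{rk}$ or $0$ according to whether $m\ge rk$, yielding~\eqref{sumxout} — so in the write-up this step reduces to invoking~\eqref{sumxout}.

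As a sanity check I would also note the purely combinatorial derivation that bypasses induction: a composition of~$n$ into exactly~$k$ parts all congruent to~$r$ modulo~$m$ is the same datum as a tuple of nonnegative integers $b_1,\dots,b_k$ with parts $a_j=r+mb_j$, so that $n=rk+m(b_1+\cdots+b_k)$; summing $x^n$ over all such tuples gives $x^{rk}\bigl(\sum_{b\ge0}x^{mb}\bigr)^k=x^{rk}/(1-x^m)^k$ at once. I would keep the inductive proof in the main text, since it reuses the recurrence just established, and mention this one-line check in passing.
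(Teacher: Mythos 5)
Your proof is correct and follows essentially the same route as the paper: split $g_k(x)$ as in~\eqref{rmkgen1}, apply the recurrence~\eqref{rmkrecur} to the tail, cancel the finite pieces via~\eqref{sumxout} to obtain $g_k(x)=x^r g_{k-1}(x)+x^m g_k(x)$, and induct from the base case $g_1(x)=x^r/(1-x^m)$ given by~\eqref{k1}. The direct combinatorial check you mention (parts $a_j=r+mb_j$ giving $x^{rk}\left(\sum_{b\ge0}x^{mb}\right)^k$) is a nice supplementary observation but not part of the paper's argument.
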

\begin{proof}
Suppose $k>1$. By~\eqref{rmkrecur} we have
\begin{align*}
\sum_{n=m+1}^{\infty}s_{r,m,k}(n)x^n&=\sum_{n=m+1}^{\infty}s_{r,m,k-1}(n-m)x^n+\sum_{n=m+1}^{\infty}s_{r,m,k}(n)x^n\\
&=x^r\sum_{n=m+1-r}^{\infty}s_{r,m,k-1}x^n+x^m\sum_{n=1}^{\infty}s_{r,m,k}(n)x^n,\end{align*}
which, along with~\eqref{rmkgen1} and~\eqref{sumxout}, yields
\begin{align*}
g_k(x)&=\sum_{n=1}^{m}s_{r,m,k}(n)x^n+x^rg_{k-1}(x)-x^r\sum_{n=1}^{m-r}s_{r,m,k-1}(n)x^n+x^mg_k(x)\\
&=x^rg_{k-1}(x)+x^mg_k(x).\end{align*}
Solving for $g_k(x)$, we have
\begin{equation}\label{gkaux}
g_k(x)=\frac{x^{r}}{1-x^m}\,g_{k-1}(x).
\end{equation}
By~\eqref{k1} we have
$$g_1(x)=\sum_{j=0}^{\infty}x^{mj+r}=\frac{x^r}{1-x^m},$$
hence by induction we achieve the desired result.
\end{proof}

We are now able to apply the generating function $g_k(x)$ to obtain a closed form expression for $s_{r,m,k}(n)$. 

\begin{theorem}\label{exact}
For all $k\ge1$, $m\ge1$, $0<r\le m$, $\gcd(r,m)=1$, we have $s_{r,m,k}(n)=0$ if $n\not\equiv rk\pmod{m}$. Otherwise $n\equiv rk\pmod{m}$ and
$$s_{r,m,k}(n)=\binom{\frac{n-rk}{m}+k-1}{k-1}.$$
\end{theorem}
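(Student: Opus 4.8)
The plan is to extract the coefficient of $x^n$ in the generating function $g_k(x) = x^{rk}/(1-x^m)^k$, which is already available from Theorem~\ref{genrmk}. First I would note that since $g_k(x)$ is a power series in $x^m$ multiplied by $x^{rk}$, the only nonzero coefficients occur at exponents $n$ with $n \equiv rk \pmod{m}$; this immediately gives $s_{r,m,k}(n) = 0$ when $n \not\equiv rk \pmod m$. So the substance is the case $n \equiv rk \pmod m$, where we may write $n = rk + mj$ for a nonnegative integer $j = (n-rk)/m$.

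Next I would invoke the standard negative binomial series expansion
$$\frac{1}{(1-y)^k} = \sum_{j=0}^{\infty} \binom{j+k-1}{k-1} y^j,$$
valid as a formal power series identity (provable by induction on $k$ from the geometric series, or by differentiating). Substituting $y = x^m$ and multiplying by $x^{rk}$ yields
$$g_k(x) = \sum_{j=0}^{\infty} \binom{j+k-1}{k-1} x^{rk+mj}.$$
Matching the coefficient of $x^n$ with $n = rk+mj$ then gives $s_{r,m,k}(n) = \binom{j+k-1}{k-1} = \binom{\frac{n-rk}{m}+k-1}{k-1}$, which is exactly the claimed formula.

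Since essentially everything reduces to the negative binomial expansion, there is no serious obstacle; the only point requiring a little care is justifying that expansion rigorously in the formal-power-series setting. I would handle this by induction on $k$: the base case $k=1$ is the geometric series $1/(1-y) = \sum_{j\ge 0} y^j$, and the inductive step uses the Cauchy product of $1/(1-y)^{k-1}$ with $1/(1-y)$ together with the hockey-stick identity $\sum_{i=0}^{j}\binom{i+k-2}{k-2} = \binom{j+k-1}{k-1}$. Alternatively, this can be read straight off the recurrence $g_k(x) = \frac{x^r}{1-x^m}\,g_{k-1}(x)$ from~\eqref{gkaux}, combining it with the same induction on $k$ and the hockey-stick identity, so that the generating-function computation and the coefficient extraction are done simultaneously. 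Either route is routine; I would present the negative binomial version as the cleanest, remarking that the formula also admits a direct combinatorial reading as the number of ways to distribute $j$ indistinguishable units of ``excess'' among $k$ parts.
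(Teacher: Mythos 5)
Your proposal is correct and follows essentially the same route as the paper: it applies the generating function $g_k(x)=x^{rk}/(1-x^m)^k$ from Theorem~\ref{genrmk}, expands via the identity~\eqref{zcombi} with $z=x^m$, and extracts coefficients, noting the vanishing when $n\not\equiv rk\pmod{m}$. The paper likewise remarks that~\eqref{zcombi} is proved by induction on $k$, so even your justification of the negative binomial expansion matches its treatment.
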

\begin{proof}
We apply Theorem~\ref{genrmk}, and the combinatorial identity
\begin{equation}\label{zcombi}
\frac1{(1-z)^k}=\sum_{j=0}^{\infty}\binom{j+k-1}{k-1}z^j,
\end{equation}
to obtain
$$\sum_{n=1}^{\infty}s_{r,m,k}(n)x^n=g_k(x)=\sum_{j=0}^{\infty}\binom{j+k-1}{k-1}x^{mj+rk}.$$
Therefore by comparing coefficients we have
$$s_{r,m,k}(n)=\binom{j+k-1}{k-1},$$
provided that $n=mj+rk$ for some integer $j\ge 0$; otherwise, $s_{r,m,k}(n)=0$. The statement of the theorem follows immediately.
\end{proof}

Therefore, as mentioned in \S~\ref{intro}, $s_{1,2,3}(73)=666$. Further examples include $s_{17,40,9}(1753)=377348994$, and $s_{5,12,8}(537)=0$, as $537\not\equiv40\pmod{12}$. 

Also, note that~\eqref{zcombi} is proved easily by induction on~$k$.

\section{Pascal's triangle and $\boldsymbol{s_{r,m}(n)}$}\label{srmpascal}
As $s_{r,m,k}(n)$ are binomial coefficients, they may be used to obtain a formula for $s_{r,m}(n)$ from \S~\ref{recerrm}, however artificial it may be. For, by definition it follows that
\begin{equation}\label{putz}
s_{r,m}(n)=\sum_{k=1}^{[n/r]}s_{r,m,k}(n).
\end{equation}
It is clear that we cannot have more than $\left[n/r\right]$ parts, as~$r$ is the smallest possible part. The addends $s_{r,m,k}(n)$ are nonzero if and only if $n\equiv rk\pmod{m}$, that is, $k\equiv r^{-1}n\pmod{m}$, where~$r^{-1}$ denotes the multiplicative inverse of~$r$ modulo~$m$ (which exists as $\gcd(r,m)=1$). Letting~$\xi$ denote the least positive residue of~$r^{-1}n$ modulo~$m$, we have
\begin{equation}\label{k}
k=m\lambda+\xi
\end{equation}
for some nonnegative integer~$\lambda$ in every nonzero addend $s_{r,m,k}(n)$ Thus, as $k\le n/r$, 
\begin{equation}\label{lambda}
\lambda\le\frac{\frac{n}{r}-\xi}{m}=\frac{n-r\xi}{rm}.
\end{equation}
Also from~\eqref{k} we have
\begin{equation}\label{up}
\frac{n-rk}{m}+k-1=(m-r)\lambda+\frac{n-r\xi}{m}+\xi-1.
\end{equation}
Hence by Theorem~\ref{exact}, and by~\eqref{putz}, \eqref{k}, \eqref{lambda}, and~\eqref{up}, 
\begin{equation}\label{tryout}
s_{r,m}(n)=\sum_{\lambda=0}^{\left[\frac{n-r\xi}{rm}\right]}
\binom{(m-r)\lambda+\frac{n-r\xi}{m}+\xi-1}{m\lambda+\xi-1}.
\end{equation}
We may apply~\eqref{tryout} to show that $s_{1,2}(10)=55$. For, $\xi=2$, hence $0\le\lambda\le4$, hence
$$s_{1,2}(10)=\sum_{\lambda=0}^4\binom{\lambda+5}{2\lambda+1}
=\binom{5}{1}+\binom{6}{3}+\binom{7}{5}+\binom{8}{7}+\binom{9}{9}=55.$$
Similarly, we obtain $s_{3,7}(35)=28$: here $\xi=7$, thus $\lambda=0$ by~\eqref{lambda}. Hence
$$s_{3,7}(35)=\binom{8}{6}=28.$$
Similarly, $s_{2,5}(31)=154$: here $\xi=3$, hence $0\le\lambda\le2$. Thus,
$$s_{2,5}(31)=\sum_{\lambda=0}^{2}\binom{3\lambda+7}{5\lambda+2}=
\binom{7}{2}+\binom{10}{7}+\binom{13}{12}=154.$$

Expressing $s_{r,m}(n)$ in this way produces some well known identities involving sums of binomial coefficients. As $s_{1,1}(n)=2^{n-1}$, we obtain
$$2^{n-1}=\sum_{\lambda=0}^{n-1}\binom{n-1}{\lambda},$$
and as $s_{1,2}=F_n$ we obtain
$$F_n=\sum_{\lambda=0}^{\frac{n-1}2}\binom{\lambda+\frac{n-1}2}{2\lambda},\quad\text{if $n$ is odd,}$$
$$F_n=\sum_{\lambda=0}^{\frac{n-2}2}\binom{\lambda+\frac{n}2}{2\lambda+1},\quad\text{if $n$ is even.}$$
For Narayana's cows sequence we obtain
$$s_{1,3}(n)=\sum_{\lambda=0}^{\frac{n-1}3}\binom{2\lambda+\frac{n-1}3}{3\lambda},\quad\text{if $n\equiv1\pmod3$,}$$
$$s_{1,3}(n)=\sum_{\lambda=0}^{\frac{n-2}3}\binom{2\lambda+\frac{n+1}3}{3\lambda+1},\quad\text{if $n\equiv2\pmod3$,}$$
$$s_{1,3}(n)=\sum_{\lambda=0}^{\frac{n-3}3}\binom{2\lambda+\frac{n+3}3}{3\lambda+2},\quad\text{if $n\equiv0\pmod3$,}$$
and for the Padovan sequence,
$$s_{2,3}(n)=\sum_{\lambda=0}^{\left[\frac{n-4}6\right]}\binom{\lambda+\frac{n-1}3}{3\lambda+1},\quad\text{if $n\equiv1\pmod3$,}$$
$$s_{2,3}(n)=\sum_{\lambda=0}^{\left[\frac{n-2}6\right]}\binom{\lambda+\frac{n-2}3}{3\lambda},\quad\text{if $n\equiv2\pmod3$,}$$
$$s_{2,3}(n)=\sum_{\lambda=0}^{\left[\frac{n-6}6\right]}\binom{\lambda+\frac{n}3}{3\lambda+2},\quad\text{if $n\equiv0\pmod3$.}$$
We observe that $s_{r,m}(n)$ is thus a sum of binomial coefficients, taken along a falling diagonal in Pascal's triangle, except that $s_{1,1}(n)$ is a sum taken along a row. This is evident from~\eqref{tryout}. The initial term (when $\lambda=0$), at the top of the diagonal, occurs in column~$\xi-1$ of Pascal's triangle. By definition, we see that $0\le\xi-1\le m-1$, and that~$\xi-1$ is determined completely by the residue of~$n$ modulo~$m$. Thus, the diagonal begins in any one of the leftmost~$m$ columns of Pascal's triangle, determined by~$n$ modulo~$m$. From~\eqref{tryout}, it is also evident that the diagonal will slope downward by jumps of $m-r$ rows down, and~$m$ columns to the right. 
\begin{figure}
\centering
\begin{picture}(360,240)
\put(0,0){\line(1,0){360}}\put(0,0){\line(0,1){240}}
\put(0,240){\line(1,0){360}}\put(360,0){\line(0,1){240}}
\put(30,220){1}
\put(30,200){1}\put(60,200){1}
\put(30,180){1}\put(60,180){2}\put(90,180){1}
\put(30,160){1}\put(60,160){3}\put(90,160){3}\put(120,160){1}
\put(30,140){1}\put(60,140){4}\put(90,140){6}\put(120,140){4}\put(150,140){1}
\put(30,120){1}\put(60,120){5}\put(90,120){10}\put(120,120){10}\put(150,120){5}\put(180,120){1}
\put(30,100){1}\put(60,100){6}\put(90,100){15}\put(120,100){20}\put(150,100){15}\put(180,100){6}\put(210,100){1}
\put(30,80){1}\put(60,80){7}\put(90,80){21}\put(120,80){35}\put(150,80){35}\put(180,80){21}\put(210,80){7}
\put(240,80){1}
\put(30,60){1}\put(60,60){8}\put(90,60){28}\put(120,60){56}\put(150,60){70}\put(180,60){56}\put(210,60){28}
\put(240,60){8}\put(270,60){1}
\put(30,40){1}\put(60,40){9}\put(90,40){36}\put(120,40){84}\put(150,40){126}\put(180,40){126}\put(210,40){84}
\put(240,40){36}\put(270,40){9}\put(300,40){1}
\put(30,20){1}\put(60,20){10}\put(90,20){45}\put(120,20){120}\put(150,20){210}\put(180,20){252}\put(210,20){210}
\put(240,20){120}\put(270,20){45}\put(300,20){10}\put(330,20){1}
\put(33,163){\circle{20}}\put(125,123){\circle{20}}\put(213,83){\circle{20}}\put(303,43){\circle{20}}
\put(180,180){$1+10+7+1=19$}
\end{picture}
 \caption{Illustration for $s_{1,3}(10)=19$.}
 \label{P131}
\end{figure}

\begin{figure}
\centering
\begin{picture}(360,240)
\put(0,0){\line(1,0){360}}\put(0,0){\line(0,1){240}}
\put(0,240){\line(1,0){360}}\put(360,0){\line(0,1){240}}
\put(30,220){1}
\put(30,200){1}\put(60,200){1}
\put(30,180){1}\put(60,180){2}\put(90,180){1}
\put(30,160){1}\put(60,160){3}\put(90,160){3}\put(120,160){1}
\put(30,140){1}\put(60,140){4}\put(90,140){6}\put(120,140){4}\put(150,140){1}
\put(30,120){1}\put(60,120){5}\put(90,120){10}\put(120,120){10}\put(150,120){5}\put(180,120){1}
\put(30,100){1}\put(60,100){6}\put(90,100){15}\put(120,100){20}\put(150,100){15}\put(180,100){6}\put(210,100){1}
\put(30,80){1}\put(60,80){7}\put(90,80){21}\put(120,80){35}\put(150,80){35}\put(180,80){21}\put(210,80){7}
\put(240,80){1}
\put(30,60){1}\put(60,60){8}\put(90,60){28}\put(120,60){56}\put(150,60){70}\put(180,60){56}\put(210,60){28}
\put(240,60){8}\put(270,60){1}
\put(30,40){1}\put(60,40){9}\put(90,40){36}\put(120,40){84}\put(150,40){126}\put(180,40){126}\put(210,40){84}
\put(240,40){36}\put(270,40){9}\put(300,40){1}
\put(30,20){1}\put(60,20){10}\put(90,20){45}\put(120,20){120}\put(150,20){210}\put(180,20){252}\put(210,20){210}
\put(240,20){120}\put(270,20){45}\put(300,20){10}\put(330,20){1}
\put(63,143){\circle{20}}\put(155,103){\circle{20}}\put(243,63){\circle{20}}\put(333,23){\circle{20}}
\put(180,180){$4+15+8+1=28$}
\end{picture}
 \caption{Illustration for $s_{1,3}(11)=28$.}
 \label{P132}
\end{figure}

\begin{figure}
\centering
\begin{picture}(360,240)
\put(0,0){\line(1,0){360}}\put(0,0){\line(0,1){240}}
\put(0,240){\line(1,0){360}}\put(360,0){\line(0,1){240}}
\put(30,220){1}
\put(30,200){1}\put(60,200){1}
\put(30,180){1}\put(60,180){2}\put(90,180){1}
\put(30,160){1}\put(60,160){3}\put(90,160){3}\put(120,160){1}
\put(30,140){1}\put(60,140){4}\put(90,140){6}\put(120,140){4}\put(150,140){1}
\put(30,120){1}\put(60,120){5}\put(90,120){10}\put(120,120){10}\put(150,120){5}\put(180,120){1}
\put(30,100){1}\put(60,100){6}\put(90,100){15}\put(120,100){20}\put(150,100){15}\put(180,100){6}\put(210,100){1}
\put(30,80){1}\put(60,80){7}\put(90,80){21}\put(120,80){35}\put(150,80){35}\put(180,80){21}\put(210,80){7}
\put(240,80){1}
\put(30,60){1}\put(60,60){8}\put(90,60){28}\put(120,60){56}\put(150,60){70}\put(180,60){56}\put(210,60){28}
\put(240,60){8}\put(270,60){1}
\put(30,40){1}\put(60,40){9}\put(90,40){36}\put(120,40){84}\put(150,40){126}\put(180,40){126}\put(210,40){84}
\put(240,40){36}\put(270,40){9}\put(300,40){1}
\put(30,20){1}\put(60,20){10}\put(90,20){45}\put(120,20){120}\put(150,20){210}\put(180,20){252}\put(210,20){210}
\put(240,20){120}\put(270,20){45}\put(300,20){10}\put(330,20){1}
\put(95,83){\circle{20}}\put(185,63){\circle{20}}\put(273,43){\circle{20}}
\put(180,180){$21+56+9=86$}
\end{picture}
 \caption{Illustration for $s_{2,3}(21)=86$.}
 \label{P231}
\end{figure}
Figures~\ref{P131}, \ref{P132}, and~\ref{P231} offer quick illustrations of this phenomenon where $m=3$.

\section{Restrictions on the size of parts}\label{restpart}
Let~$q$ be a fixed natural number. For any natural number~$n$, let $t_q(n)$ denote the number of compositions of~$n$ 
such that all its parts are less than or equal to~$q$. For example, $t_3(5)=13$ as
\begin{align*}
5&=3+2    &5&=1+3+1\\
5&=3+1+1  &5&=1+2+2\\
5&=2+3    &5&=1+2+1+1\\
5&=2+2+1  &5&=1+1+3\\
5&=2+1+2  &5&=1+1+2+1\\
5&=2+1+1+1&5&=1+1+1+2\\
 &        &5&=1+1+1+1+1\end{align*}
Let $u_q(n)$ denote the number of compositions of~$n$ 
such that all its parts are greater than or equal to~$q$. For example, $u_3(11)=13$ as
\begin{align*}
11&=11    &11&=4+7\\
11&=8+3  &11&=4+4+3\\
11&=7+4    &11&=4+3+4\\
11&=6+5  &11&=3+8\\
11&=5+6  &11&=3+5+3\\
11&=5+3+3&11&=3+4+4\\
 &        &11&=3+3+5\end{align*}
As before, we may obtain both the recursion and the generating function for the sequences $t_q(n)$ and $u_q(n)$.

\begin{theorem}\label{trec}
For the fixed natural number~$q$, we have
\begin{equation}\label{t7}
t_q(n)=\begin{cases}
2^{n-1},&\text{if $n\le q$,}\\
\sum_{j=1}^{q}t_q(n-j),&\text{if $n>q$.}
\end{cases}
\end{equation}
\end{theorem}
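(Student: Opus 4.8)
The plan is to imitate the ``alternative'' proof of Theorem~\ref{01} and the proof of Theorem~\ref{rm}: split the base case off by a triviality about the size of parts, and establish the recurrence by partitioning the relevant compositions according to the value of their first part.

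First I would dispose of the case $n\le q$. Every composition of~$n$ written as in~\eqref{acform} has $a_j\le n\le q$ for each~$j$, simply because no single part of a composition of~$n$ can exceed~$n$. Hence, when $n\le q$, the restriction ``$a\le q$ for all parts~$a$'' imposes nothing, so $t_q(n)$ counts \emph{all} compositions of~$n$, and $t_q(n)=C(n)=2^{n-1}$ by Theorem~\ref{01}. Next, for $n>q$, I would note that in any composition counted by $t_q(n)$ the first part satisfies $1\le a_1\le q$. For each fixed $j$ with $1\le j\le q$, let $A_j$ be the set of such compositions with $a_1=j$; these sets are pairwise disjoint and exhaust the collection, so $t_q(n)=\sum_{j=1}^{q}\#A_j$. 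Deleting the first part of a composition in $A_j$ leaves
$$n-j=a_2+a_3+\cdots+a_k,$$
which, since $j\le q<n$ forces $n-j\ge 1$, is a genuine nonempty composition of the natural number $n-j$, still with all parts $\le q$; conversely, prepending the part~$j$ recovers a member of $A_j$. This bijection gives $\#A_j=t_q(n-j)$, and summing over $j$ yields $t_q(n)=\sum_{j=1}^{q}t_q(n-j)$.

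The argument is essentially routine, and I do not expect a genuine obstacle; the one place that warrants a moment's care is the seam between the two cases — specifically, verifying that when $n>q$ there is no one-part composition to be handled separately (it is automatically excluded, a lone part equal to~$n$ exceeding~$q$), and that the shifted arguments $n-j$ stay positive for every $j\in\{1,\dots,q\}$, which is precisely what makes the deletion map a bijection onto nonempty compositions of $n-j$ rather than something needing an ad hoc correction term.
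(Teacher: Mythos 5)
Your proposal is correct and follows essentially the same route as the paper's proof: the case $n\le q$ is reduced to Theorem~\ref{01} since no part of a composition of~$n$ can exceed~$n$, and for $n>q$ the compositions are partitioned by the value of the first part into sets $A_j$ with $\#A_j=t_q(n-j)$ via deletion of that part. Your extra remarks about $n-j\ge1$ and the exclusion of the one-part composition are fine but only make explicit what the paper leaves implicit.
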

\begin{proof}
It is clear that $t_q(n)=2^{n-1}$ when $n\le q\,$; this follows directly from Theorem~\ref{01}. Assume, then, that $n>q$. For each~$j$, $1\le j\le q$, let $A_j$ denote the set of compositions counted by $t_q(n)$, such that $a_1=j$ in the notation of~\eqref{acform}. The sets~$A_j$ are mutually disjoint, hence
$$t_q(n)=\sum_{j=1}^q \#A_j.$$
Each composition in~$A_j$ may be written as
$$n-j=a_2+a_3+\cdots+a_k,$$
hence $\#A_j=t_q(n-j)$. The result follows immediately.
\end{proof}

\begin{theorem}\label{tgenf}
The generating function for $t_q(n)$ is given by
\begin{equation*}
f(x)=\sum_{n=1}^{\infty}t_q(n)x^n=\frac{x+x^2+\cdots+x^q}{1-x-x^2-\cdots-x^q}.
\end{equation*}
\end{theorem}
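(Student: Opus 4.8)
The plan is to mirror the proof of Theorem~\ref{genrm}: split $f(x)$ at the index $n=q$, substitute the recurrence of Theorem~\ref{trec} into the tail, and solve the resulting functional equation for $f(x)$. So I would first write
$f(x)=\sum_{n=1}^{q}t_q(n)x^n+\sum_{n=q+1}^{\infty}t_q(n)x^n$,
and by Theorem~\ref{trec} replace the tail by $\sum_{n=q+1}^{\infty}\sum_{j=1}^{q}t_q(n-j)x^n$. Interchanging the order of summation and shifting the inner index, this becomes $\sum_{j=1}^{q}x^j\sum_{n=q+1-j}^{\infty}t_q(n)x^n=\sum_{j=1}^{q}x^j\bigl(f(x)-\sum_{n=1}^{q-j}t_q(n)x^n\bigr)$, i.e.\ $(x+x^2+\cdots+x^q)f(x)$ minus the correction double sum $E(x):=\sum_{j=1}^{q}x^j\sum_{n=1}^{q-j}t_q(n)x^n$.

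The key step is to show $E(x)=\sum_{n=1}^{q}t_q(n)x^n-(x+x^2+\cdots+x^q)$, so that the head of $f(x)$ cancels against $-E(x)$ and leaves precisely $x+x^2+\cdots+x^q$. To see this, regroup $E(x)$ by total degree $n=m+j$, obtaining $E(x)=\sum_{n=2}^{q}x^n\sum_{m=1}^{n-1}t_q(m)$. For $2\le n\le q$ we have $n-1<q$, so Theorem~\ref{trec} gives $t_q(m)=2^{m-1}$ for each $1\le m\le n-1$, and the inner sum telescopes to $2^{n-1}-1=t_q(n)-1$. Hence $E(x)=\sum_{n=2}^{q}(t_q(n)-1)x^n$, and since $t_q(1)=1$ this equals $\sum_{n=1}^{q}t_q(n)x^n-(x+x^2+\cdots+x^q)$, as claimed.

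Combining the pieces yields $f(x)=(x+x^2+\cdots+x^q)f(x)+(x+x^2+\cdots+x^q)$, and solving for $f(x)$ gives the stated closed form. The main obstacle is entirely bookkeeping: keeping the finite correction sums straight and verifying the telescoping cancellation with the initial data from Theorem~\ref{trec} — the same subtlety that surfaced as the ``$X-x^r$'' term in the proof of Theorem~\ref{genrm}. I would also note a slicker alternative for the curious reader: a composition into parts $\le q$ is a nonempty finite word whose letters each carry generating function $x+x^2+\cdots+x^q$, so $f(x)=\sum_{k=1}^{\infty}(x+x^2+\cdots+x^q)^k$, which sums geometrically to the same expression; but I would present the recurrence-based argument in the main text to keep the section uniform with what precedes it.
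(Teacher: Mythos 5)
Your proposal is correct and follows essentially the same route as the paper's own proof: split $f(x)$ at $n=q$, substitute the recurrence of Theorem~\ref{trec} into the tail, regroup the finite correction double sum by total degree (your $n=m+j$ is the paper's $n=a+b$), evaluate it using $t_q(m)=2^{m-1}$ for $m<q$ so that it cancels against the head up to $x+x^2+\cdots+x^q$, and solve for $f(x)$. The geometric-series-of-words remark you add at the end is a nice alternative, but the main argument coincides with the paper's.
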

\begin{proof}
By~\eqref{t7} we have
\begin{multline*}
f(x)=x+2x^2+4x^3+\cdots+2^{q-1}x^q\\
+\sum_{n=q+1}^{\infty}t_q(n-1)x^n+\sum_{n=q+1}^{\infty}t_q(n-2)x^n+\cdots+\sum_{n=q+1}^{\infty}t_q(n-q)x^n.
\end{multline*}
Hence,
\begin{align*}
f(x)&=\sum_{n=1}^{q}2^{n-1}x^{n} + \sum_{a=1}^{q-1} x^{a}\left(f(x)-\sum_{b=1}^{q-a}2^{b-1}x^{b}\right)+x^{q}f(x)\\
&=\sum_{n=1}^{q}2^{n-1}x^{n} + \sum_{a=1}^{q}x^{a}f(x) -\sum_{a=1}^{q-1} x^{a}\sum_{b=1}^{q-a}2^{b-1}x^{b}.
\end{align*}
Thus,
\begin{equation}\label{auxtqg}
f(x)(1-x-x^2-\cdots-x^q)=\sum_{n=1}^{q}2^{n-1}x^{n} - \sum_{a=1}^{q-1} x^{a}\sum_{b=1}^{q-a}2^{b-1}x^{b}.
\end{equation}
To simplify the iterated sum, let $n=a+b$. Thus $2\le n\le q$ and $b=n-a$. Thus,
\begin{multline*}
\sum_{a=1}^{q-1} x^{a}\sum_{b=1}^{q-a}2^{b-1}x^{b}=\sum_{n=2}^{q}x^{n}\sum_{a=1}^{n-1}2^{n-a-1}\\
=\sum_{n=2}^{q}x^{n}\left(2^{n-1}-1\right)=\sum_{n=2}^{q}2^{n-1}x^{n}-\sum_{n=2}^{q}x^{n}.
\end{multline*}
Substituting into~\eqref{auxtqg}, 
\begin{align*}
f(x)(1-x-x^2-\cdots-x^q)&=\sum_{n=1}^{q}2^{n-1}x^{n}-\sum_{n=2}^{q}2^{n-1}x^{n}+\sum_{n=2}^{q}x^{n}\\
&=x+x^2+\cdots+x^q,\end{align*}
from which the result follows immediately.
\end{proof}

It is clear by definition that $t_1(n)=1$ for all natural numbers~$n$. Theorem~\ref{tgenf} illustrates this via the geometric sum formula and comparing coefficients:
$$\sum_{n=1}^{\infty}t_1(n)x^n=\frac{x}{1-x}=\sum_{n=1}^{\infty}x^n.$$
Likewise, Theorem~\ref{tgenf}, along with Theorem~\ref{genrm}, \eqref{Binet}, and~\eqref{Fibx}, implies $t_2(n)=F_{n+1}$, as
\begin{gather*}
\sum_{n=1}^{\infty}t_2(n)x^n=\frac{x+x^2}{1-x-x^2}=(1+x)\frac{x}{1-x-x^2}\\
=(1+x)\sum_{n=1}^{\infty}s_{1,2}(n)x^n=(1+x)\sum_{n=1}^{\infty}F_nx^n=\sum_{n=1}^{\infty}F_nx^n+\sum_{n=1}^{\infty}F_nx^{n+1}\\
=x+\sum_{n=1}^{\infty}\left(F_n+F_{n-1}\right)x^n=\sum_{n=1}^{\infty}F_{n+1}x^n.
\end{gather*}
\begin{table}
\centering
 \begin{tabular}{|lll|lll|} 
 \hline
 $q$ & $t_q(n)$ & in {\it OEIS} seq. &  $q$ & $t_q(n)$ & in {\it OEIS} seq.\\
 \hline
 2 & $a(n+1)$ &{\tt A000045} & 7 & $a(n-1)$ & {\tt A172316} \\ 
 3 & $a(n+2)$ &{\tt A000073} & 8 & $a(n-1)$ & {\tt A172317} \\
 4 & $a(n+3)$ &{\tt A000078} & 9 & $a(n-1)$ & {\tt A172318} \\
 5 & $a(n+4)$ &{\tt A001591} & 10 & $a(n-1)$ & {\tt A172319} \\
 6 & $a(n+5)$ &{\tt A001592} & 11 & $a(n-1)$ & {\tt A172320} \\
 \hline
 \end{tabular}
 \caption{Some sequences of form $t_q(n)$.}
 \label{tqno}
\end{table}
Several sequences $t_q(n)$, as defined recursively by~\eqref{t7}, appear in the {\it OEIS\/}, as shown in Table~\ref{tqno}. Obtaining closed form expressions for $t_q(n)$ directly from Theorem~\ref{tgenf} when $q>2$ entails the same difficulties as discussed in \S~\ref{gen}.

\begin{theorem}\label{urec}
For the fixed natural number~$q$, we have
\begin{equation}\label{u7}
u_q(n)=\begin{cases}
0,&\text{if $n< q$,}\\
1,&\text{if $n= q$,}\\
u_q(n-1)+u_q(n-q),&\text{if $n>q$.}
\end{cases}
\end{equation}
\end{theorem}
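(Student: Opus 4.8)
The plan is to mirror the case analysis used for $t_q(n)$ in Theorem~\ref{trec}, but now splitting on whether the \emph{first} part of a composition equals~$q$ or exceeds~$q$, which is exactly the dichotomy that produced the earlier two-term recurrences for $s_{r,m}(n)$ and $D(n)$. First I would dispose of the base cases: if $n<q$ there is no composition of~$n$ all of whose parts are at least~$q$, since already a single part would be too small, so $u_q(n)=0$; and if $n=q$ the only such composition is $n=q$ itself (any composition with two or more parts would have sum at least $2q>q$), so $u_q(n)=1$. These are immediate from the definition.

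For $n>q$, I would fix a composition $n=a_1+a_2+\cdots+a_k$ counted by $u_q(n)$ and observe that $a_1\ge q$, so either $a_1=q$ or $a_1>q$. Let $A$ be the set of those compositions with $a_1=q$ and $B$ the set of those with $a_1>q$; these are disjoint and exhaust the compositions counted by $u_q(n)$, so $u_q(n)=\#A+\#B$. A composition in~$A$ restricts to $n-q=a_2+\cdots+a_k$, a composition of $n-q$ all of whose parts are at least~$q$ (and, in the degenerate case $n=q$ would give the empty composition, but here $n>q$ so $n-q\ge1$), whence $\#A=u_q(n-q)$. A composition in~$B$ can be rewritten as $n-1=(a_1-1)+a_2+\cdots+a_k$, where the new first part $a_1-1\ge q$ and all remaining parts are $\ge q$; conversely every composition of $n-1$ into parts $\ge q$ arises this way by adding~$1$ back to its first part. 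Hence $\#B=u_q(n-1)$, and combining gives $u_q(n)=u_q(n-1)+u_q(n-q)$.

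The one subtlety worth being careful about — the only place I expect any friction — is the bijection behind $\#B=u_q(n-1)$: one must check that subtracting~$1$ from the first part never drops it below~$q$ (true since $a_1\ge q+1$ in~$B$) and that the map is a genuine bijection onto \emph{all} compositions of $n-1$ into parts $\ge q$, including the single-part composition $n-1=n-1$ when $n-1\ge q$. Both directions are routine once stated, so the proof is short; the rest is just assembling the three cases into the piecewise formula~\eqref{u7}.
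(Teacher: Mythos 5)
Your proposal is correct and follows essentially the same argument as the paper: split the compositions counted by $u_q(n)$ for $n>q$ according to whether $a_1=q$ (delete the first part, giving $u_q(n-q)$) or $a_1>q$ (subtract $1$ from the first part, giving $u_q(n-1)$), after handling the trivial cases $n<q$ and $n=q$. Your extra care with the bijection for the $a_1>q$ case is a sound addition but does not change the substance of the proof.
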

\begin{proof}
It is clear that $u_q(n)=0$ if $n<q$ by definition, as it is that $u_q(q)=1$. Assume, then, that $n>q$. Let~$A$ denote the set of all compositions counted by~$u_q(n)$ such that $a_1>q$, and let~$B$ denote those in which $a_1=q$, in the notation of~\eqref{acform}. 

As~$A$ and~$B$ are disjoint, we have $u_q(n)\#A+\#B$. Every composition in~$A$ has the form
$$n-1=(a_1-1)+a_2+\cdots+a_k,$$
hence $\#A=u_q(n-1)$. Likewise, every composition in~$B$ has the form
$$n-q=a_2+a_3+\cdots+a_k,$$
hence $\#B=u_q(n-q)$. The result follows immediately.
\end{proof}

\begin{theorem}\label{ugenf}
The generating function for $u_q(n)$ is given by
\begin{equation*}
f(x)=\sum_{n=1}^{\infty}u_q(n)x^n=\frac{x^q}{1-x-x^q}.
\end{equation*}
\end{theorem}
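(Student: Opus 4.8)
The plan is to feed the recurrence of Theorem~\ref{urec} into the defining power series, exactly as was done for $s_{r,m}(n)$ in Theorem~\ref{genrm} and for $t_q(n)$ in Theorem~\ref{tgenf}. Since $u_q(n)=0$ for $n<q$ while $u_q(q)=1$, the series begins
$$f(x)=\sum_{n=q}^{\infty}u_q(n)x^n=x^q+\sum_{n=q+1}^{\infty}u_q(n)x^n.$$
Into the tail I would substitute $u_q(n)=u_q(n-1)+u_q(n-q)$, which is valid for every $n>q$, obtaining
$$f(x)=x^q+\sum_{n=q+1}^{\infty}u_q(n-1)x^n+\sum_{n=q+1}^{\infty}u_q(n-q)x^n.$$

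What remains is the bookkeeping. Re-indexing the first tail sum by $m=n-1$ gives $x\sum_{m=q}^{\infty}u_q(m)x^m$, which equals $xf(x)$ because $u_q$ vanishes below~$q$; re-indexing the second tail sum by $m=n-q$ gives $x^q\sum_{m=1}^{\infty}u_q(m)x^m=x^qf(x)$. Collecting terms yields
$$f(x)=x^q+xf(x)+x^qf(x),$$
so that $(1-x-x^q)f(x)=x^q$, and solving for $f(x)$ produces the claimed closed form.

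The argument is entirely routine; the one place to take care — and hence the ``main obstacle'', such as it is — is verifying that the lower limits of the shifted summations line up so that each re-indexed series genuinely reconstitutes all of $f(x)$, with no stray low-order terms left behind. Here the vanishing of $u_q(n)$ for $n<q$ makes this painless, in pleasant contrast to the computation for $t_q(n)$, where the initial block $\sum_{n=1}^{q}2^{n-1}x^n$ had to be carried along and then cancelled against an iterated sum. As a final sanity check one may confirm the extreme cases: $q=1$ gives $x/(1-2x)=\sum_{n\ge1}2^{n-1}x^n$, agreeing with $u_1(n)=2^{n-1}$, while $q=2$ gives $x^2/(1-x-x^2)$, which by~\eqref{Fibx} corresponds to $u_2(n)=F_{n-1}$.
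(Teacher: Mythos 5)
Your proposal is correct and follows essentially the same route as the paper: substitute the recurrence $u_q(n)=u_q(n-1)+u_q(n-q)$ from Theorem~\ref{urec} into the tail of the series, re-index to recognize $xf(x)$ and $x^qf(x)$, and solve $f(x)=x^q+xf(x)+x^qf(x)$. Your explicit attention to the lower limits of the shifted sums (using that $u_q(n)=0$ for $n<q$) is a welcome bit of care that the paper's terser computation leaves implicit.
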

\begin{proof}
By~\eqref{u7} we have
\begin{align*}
f(x)&=x^q+\sum_{n=q+1}^{\infty}u_q(n-1)x^n\sum_{n=q+1}^{\infty}u_q(n-q)x^n\\
&=x^q+xf(x)+x^qf(x),\end{align*}
from which the result follows.
\end{proof}

\begin{table}
\centering
 \begin{tabular}{|lll|lll|} 
 \hline
 $q$ & $u_q(n)$ & in {\it OEIS} seq. &  $q$ & $u_q(n)$ & in {\it OEIS} seq.\\
 \hline
 2 & $a(n-1)$ &{\tt A000045} & 9 & $a(n)\quad$ & {\tt A017903} \\ 
 3 & $a(n)$ &{\tt A078012} & 10 & $a(n)$ & {\tt A017904} \\
 4 & $a(n)$ &{\tt A017898} & 11 & $a(n)$ & {\tt A017905} \\
 5 & $a(n)$ &{\tt A017899} & 12 & $a(n)$ & {\tt A017906} \\
 6 & $a(n)$ &{\tt A017900} & 13 & $a(n)$ & {\tt A017907} \\
 7 & $a(n)$ &{\tt A017901} & 14 & $a(n)$ & {\tt A017908} \\
 8 & $a(n)$ &{\tt A017902} & 15 & $a(n)$ & {\tt A017909} \\
\hline
 \end{tabular}
 \caption{Some sequences of form $u_q(n)$.}
 \label{uqno}
\end{table}

Similar as in our discussion of $t_q(n)$, it is easy to see that $u_1(n)=2^{n-1}$ and $u_2(n)=F_{n-1}$. Several of the sequences $u_q(n)$ also appear in the {\it OEIS\/}, as seen in Table~\ref{uqno}. 

It is interesting to compare $u_3(n)$ with $s_{1,3}(n)$ as discussed in \S~\ref{gen}. The respective generating functions are
$$\sum_{n=1}^{\infty}u_3(n)x^n=\frac{x^3}{1-x-x^3},\qquad\sum_{n=1}^{\infty}s_{1,3}(n)x^n=\frac{x}{1-x-x^3},$$
hence,
$$\sum_{n=1}^{\infty}u_3(n)x^n=\sum_{n=1}^{\infty}s_{1,3}(n)x^{n+2},$$
so that by comparing coefficients we have $u_3(n)=s_{1,3}(n-2)$ for all $n\ge3$ (recall $u_3(1)=u_3(2)=0$).

Now we consider what happens when we restrict the size of the parts from below and from above. Let~$p$ and~$q$ be fixed natural numbers such that $p\le q$. For all natural numbers~$n$, let $v_{p,q}(n)$ denote the number of additive compositions of~$n$ such that all the parts are greater than or equal to~$p$, and less than or equal to~$q$. As an example, we see that $v_{5,9}(16)=6\,$:
\begin{align*}
16&=9+7   &16&=6+5+5\\
16&=8+8   &16&=5+6+5\\
16&=7+9   &16&=5+5+6\end{align*}
The recursion for $v_{p,q}(n)$ is expressed with initial conditions in terms of $u_p(n)$.

\begin{theorem}\label{recurvpq}
For fixed natural numbers $p\le q$, we have $v_{p,q}(n)=u_p(n)$ if $n\le q$. Otherwise $n>q$ whence
$$v_{p,q}(n)=\sum_{k=p}^{q}v_{p,q}(n-k).$$
\end{theorem}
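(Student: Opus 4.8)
The plan is to follow the template of Theorems~\ref{trec} and~\ref{urec}: split the compositions counted by $v_{p,q}(n)$ according to the value of the first part~$a_1$, and recognize each resulting class as a smaller instance of the same counting function.

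First I would settle the range $n\le q$. The point is that in any composition $n=a_1+a_2+\cdots+a_\ell$ every part satisfies $a_i\le n$; so when $n\le q$ the upper bound $a_i\le q$ holds automatically, and the sole effective restriction is $a_i\ge p$. Hence the compositions counted by $v_{p,q}(n)$ are precisely those counted by $u_p(n)$, giving $v_{p,q}(n)=u_p(n)$ for $n\le q$.

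Now assume $n>q$. For each integer~$k$ with $p\le k\le q$, let $A_k$ be the set of compositions counted by $v_{p,q}(n)$ whose first part equals~$k$. Since every composition counted by $v_{p,q}(n)$ has $p\le a_1\le q$, the sets $A_k$ partition the collection counted by $v_{p,q}(n)$, so
$$v_{p,q}(n)=\sum_{k=p}^{q}\#A_k.$$
Deleting the first part of a composition in $A_k$ yields
$$n-k=a_2+a_3+\cdots+a_\ell,$$
and since $n-k\ge n-q\ge1$ this is a genuine composition of $n-k$, all of whose parts lie between~$p$ and~$q$; the map is plainly a bijection onto all such compositions, so $\#A_k=v_{p,q}(n-k)$. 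Summing gives the stated recurrence.

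There is no real obstacle here; the only care needed is the boundary bookkeeping. One should note that when $n>q$ there is no one-part composition among those counted (the sum $n=n$ would force $a_1=n>q$), which is exactly why the initial stretch $n\le q$ is governed by $u_p$ rather than by some genuinely new initial data, and one should check $n-k\ge1$ so that every term on the right-hand side is defined.
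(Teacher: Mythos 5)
Your proof is correct and follows essentially the same route as the paper: the case $n\le q$ reduces to $u_p(n)$ because the upper bound is automatic, and for $n>q$ one partitions by the value of the first part and deletes it to get $\#A_k=v_{p,q}(n-k)$. Your extra remarks checking $n-k\ge1$ and ruling out one-part compositions are sound bookkeeping that the paper leaves implicit.
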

\begin{proof}
If $n\le q$, then no part of any composition of~$n$ exceeds~$q$, therefore $v_{p,q}=u_p(n)$ in this case by definition of $u_p(n)$.

Otherwise $n>q$. In the notation of~\eqref{acform}, each composition counted by $v_{p,q}(n)$ must satisfy $a_1=k$, for some~$k$ such that $p\le k\le q$. Let~$A_k$ denote the set of such compositions such that $a_1=k$. Then the sets~$A_k$ are mutually disjoint, hence 
$$v_{p,q}(n)=\sum_{k=p}^{q}\#A_k.$$
For each~$k$, every composition in~$A_k$ has the form
$$n-k=a_2+a_3+\cdots+a_k,$$
hence $\#A_k=v_{p,q}(n-k)$. The result follows.
\end{proof}                                          

\begin{theorem}\label{toughy}
Let~$p$ and~$q$ be fixed natural numbers such that $p\le q$. The generating function for $v_{p,q}(n)$ is given by
$$f(x)=\sum_{n=1}^{\infty}v_{p,q}(n)=\frac{x^{p}+x^{p+1}+\cdots+x^q}{1-x^{p}-x^{p+1}-\cdots-x^q}.$$
\end{theorem}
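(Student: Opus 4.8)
The plan is to mirror the derivations of Theorems~\ref{tgenf} and~\ref{ugenf}, feeding the recurrence of Theorem~\ref{recurvpq} into the defining series for $f(x)$ and solving for $f(x)$. First I would split the series at $n=q$:
$$f(x)=\sum_{n=p}^{q}v_{p,q}(n)x^n+\sum_{n=q+1}^{\infty}v_{p,q}(n)x^n,$$
noting that $v_{p,q}(n)=0$ for $n<p$. For the low-order part, Theorem~\ref{recurvpq} gives $v_{p,q}(n)=u_p(n)$ when $n\le q$; I would use the generating function for $u_p(n)$ from Theorem~\ref{ugenf} to identify $\sum_{n=p}^{q}u_p(n)x^n$ as the degree-$\le q$ truncation of $x^p/(1-x-x^p)$. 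For the tail, I would apply the recurrence $v_{p,q}(n)=\sum_{k=p}^{q}v_{p,q}(n-k)$ to write
$$\sum_{n=q+1}^{\infty}v_{p,q}(n)x^n=\sum_{k=p}^{q}x^k\sum_{n=q+1-k}^{\infty}v_{p,q}(n)x^n=\sum_{k=p}^{q}x^k\left(f(x)-\sum_{n=p}^{q-k}v_{p,q}(n)x^n\right),$$
where again $v_{p,q}(n)=u_p(n)$ in the inner finite sums since the indices never exceed $q$.

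Assembling the pieces yields
$$f(x)\left(1-\sum_{k=p}^{q}x^k\right)=\sum_{n=p}^{q}u_p(n)x^n-\sum_{k=p}^{q}x^k\sum_{n=p}^{q-k}u_p(n)x^n,$$
and solving for $f(x)$ gives the claimed closed form provided the right-hand side collapses to $x^p+x^{p+1}+\cdots+x^q$. To see this cancellation, I would argue directly with the recurrence rather than manipulating the $u_p$-truncations explicitly: for $p\le n\le q$ the recurrence of Theorem~\ref{recurvpq} does not apply (it only holds for $n>q$), but $v_{p,q}$ still satisfies $v_{p,q}(n)=v_{p,q}(n-p)+v_{p,q}(n-p-1)+\cdots$, truncated at the bottom, and one checks that exactly the ``missing'' low-index terms are what the double sum subtracts. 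Concretely, writing $P(x)=x^p+\cdots+x^q$ and $V(x)=\sum_{n=p}^{q}v_{p,q}(n)x^n$, the identity to verify is $V(x)-\big(P(x)V(x)\big)_{\le q}=P(x)$, i.e.\ the degree-$\le q$ part of $(1-P(x))V(x)$ equals $P(x)$; this follows because for each $n$ with $p\le n\le q$, summing the contributions of $a_1=k$ (for $p\le k\le n-p$, together with the single composition $n=n$ when $p\le n\le q$) reproduces the convolution identity, and these are precisely the terms retained after truncation.

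Alternatively — and this is the cleaner route I would actually write up — I would avoid reproving the cancellation from scratch and instead deduce Theorem~\ref{toughy} from Theorem~\ref{tgenf} by a substitution/shift argument analogous to the $u_3(n)=s_{1,3}(n-2)$ observation. A composition of $n$ with all parts in $[p,q]$ and exactly $k$ parts corresponds bijectively, by subtracting $p-1$ from each part, to a composition of $n-k(p-1)$ with all parts in $[1,q-p+1]$ and exactly $k$ parts. Translating this bijection into generating functions: if $t_{q-p+1}(n)$ is refined by number of parts, the substitution $x\mapsto x$ twisted by the factor $x^{p-1}$ per part converts the generating function $\dfrac{x+\cdots+x^{q-p+1}}{1-x-\cdots-x^{q-p+1}}$ of Theorem~\ref{tgenf} into $\dfrac{x^p+\cdots+x^q}{1-x^p-\cdots-x^q}$, since both numerator and denominator pick up one extra power of $x$ for each of the (one more in the numerator than in the denominator) implicit parts. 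The main obstacle in either approach is bookkeeping: making the truncated finite sums cancel cleanly, or making the ``$x^{p-1}$ per part'' substitution rigorous at the level of formal power series. The first approach is self-contained but computational; the second is slicker but requires stating the per-part refinement carefully. I expect to present the first, following verbatim the template of the proof of Theorem~\ref{tgenf}.
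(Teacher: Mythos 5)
Your main derivation is correct and coincides with the paper's own proof up to its equation \eqref{step2}: the same split of the series at $n=q$, the same use of $v_{p,q}(n)=u_p(n)$ for $n\le q$ via Theorem~\ref{recurvpq}, and the same insertion of the recurrence into the tail, giving $f(x)\bigl(1-x^p-x^{p+1}-\cdots-x^q\bigr)=\sum_{n=p}^{q}u_p(n)x^n-\sum_{k=p}^{q}x^k\sum_{n=p}^{q-k}u_p(n)x^n$. Where you diverge is in collapsing the right-hand side to $x^p+x^{p+1}+\cdots+x^q$: reading off the coefficient of $x^n$ for $p\le n\le q$, you need $u_p(n)-\sum_{k=p}^{n-p}u_p(n-k)=1$, and you prove this directly by splitting the compositions of $n$ with all parts at least~$p$ (no part can exceed $q$ since $n\le q$) according to the first part, the lone one-part composition $n=n$ supplying the $+1$. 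The paper isolates the same coefficient identity as \eqref{uno}, separates the cases $2p>q$ and $2p\le q$, and proves \eqref{uno} by induction on~$n$ using the recurrence \eqref{u7}. Both are sound; your combinatorial justification is slightly cleaner, since it needs no case distinction and no induction (the range $p\le n<2p$ is just the empty-sum case, where $u_p(n)=1$ trivially). Your sketched alternative via the shift to $t_{q-p+1}$ would also succeed, but, as you concede, it needs a refinement by number of parts (in effect $f(x)=\sum_{k\ge1}(x^p+\cdots+x^q)^k$) that the univariate closed form of Theorem~\ref{tgenf} does not by itself furnish; since you present the first argument, nothing rests on that sketch.
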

\begin{proof}
By Theorem~\ref{recurvpq},
\begin{align*}
f(x)&=\sum_{n=1}^{\infty}v_{p,q}(x)x^n\\
&=\sum_{n=1}^{q}u_p(n)x^n+\sum_{n=q+1}^{\infty}v_{p,q}(n)x^n\\
&=\sum_{n=1}^{q}u_p(n)x^n+\sum_{k=p}^{q}\sum_{n=q+1}^{\infty}v_{p,q}(n-k)x^n.
\end{align*}
Thus by Theorem~\ref{urec},
\begin{equation}\label{vsetup}
f(x)=\sum_{n=p}^{q}u_p(n)x^n+\sum_{k=p}^{q}\sum_{n=q+1}^{\infty}v_{p,q}(n-k)x^n.
\end{equation}
Fix~$k$ such that $p\le k\le q$. Then
\begin{align*}
\sum_{n=q+1}^{\infty}v_{p,q}(n-k)x^n&=x^{k}\sum_{n=q-k+1}^{\infty}v_{p,q}(n)x^n\\
&=x^{k}\left(f(x)-\sum_{n=1}^{q-k}v_{p,q}(n)x^n\right)\\
&=x^{k}f(x)-x^{k}\sum_{n=p}^{q-k}u_p(n)x^n,\end{align*}
where the lower limit of summation in the bottom sum is determined by Theorem~\ref{urec}. Furthermore, this sum is empty when $p>q-k$, thus we need only consider $p\le k\le q-p$ when iterating this sum by summing it over the index~$k$.  We apply this to~\eqref{vsetup} to obtain
$$f(x)=\sum_{n=p}^{q}u_p(n)x^n+f(x)\sum_{k=p}^{q}x^k -\sum_{k=p}^{q-p}x^k\sum_{n=p}^{q-k}u_p(n)x^n,$$
hence
\begin{equation}\label{step2}
f(x)\left(1-x^{p}-x^{p+1}-\cdots-x^{q}\right)=\sum_{n=p}^{q}u_p(n)x^n-\sum_{k=p}^{q-p}x^k\sum_{n=p}^{q-k}u_p(n)x^n.
\end{equation}
This gives two cases for~\eqref{step2}. Either $2p>q$, and the iterated sum on the right is empty, or, $2p\le q$ and the iterated sum is not empty.

In the former case, we note that by Theorem~\ref{urec}, $u_p(n)=1$ for all~$n$ such that $p\le n\le q$, whence
$$f(x)\left(1-x^{p}-x^{p+1}-\cdots-x^{q}\right)=x^p+x^{p+1}+\cdots+x^q,$$
and the statement of the theorem follows immediately. Thus we need only to consider the latter case, $2p\le q$. Here, in the iterated sum in the right hand side of~\eqref{step2}, we let $b=k+n$; thus, as $k+n\le k+(q-k)=q$, we have $2p\le b\le q$. Then, as $n=b-k$, we have
$$\sum_{k=p}^{q-p}x^k\sum_{n=p}^{q-k}u_p(n)x^n=\sum_{b=2p}^{q}x^b\sum_{n=p}^{b-p}u_p(n).$$
Substituting into~\eqref{step2}, and renaming the indices on the right hand side immediately above, we have
\begin{multline}\label{step3}
f(x)\left(1-x^{p}-x^{p+1}-\cdots-x^{q}\right)=\\
\sum_{n=p}^{2p-1}x^n+\sum_{n=2p}^{q}
\left(u_p(n)-\sum_{k=p}^{n-p}u_p(k)\right)x^{n},
\end{multline}
where we recall by Theorem~\ref{urec} that $u_p(n)=1$ for all~$n$ such that $1\le n\le2p-1$. 

It remains to show that
\begin{equation}\label{uno}
u_p(n)-\sum_{k=p}^{n-p}u_p(n)=1
\end{equation}
for all~$n$ such that $2p\le n\le q$. We use induction. When $n=2p$, we have by Theorem~\ref{urec},
$$u_p(2p)-u_p(p)=u_p(2p-1)=1.$$
Thus, for $2p<n\le q$, 
\begin{align*}
u_p(n)-\sum_{k=p}^{n-p}u_p(k)&=u_p(n-1)+u_p(n-p)-\sum_{k=p}^{n-p}u_p(k)\\
&=u_p(n-1)-\sum_{k=p}^{n-p-1}u_p(k)\,=\,1\end{align*}
by induction hypothesis. Thus, having proved~\eqref{uno}, we substitute into~\eqref{step3} to obtain
\begin{align*}
f(x)\left(1-x^{p}-x^{p+1}-\cdots-x^{q}\right)&=\sum_{n=p}^{2p-1}x^n+\sum_{n=2p}^{q}x^n\\
&=x^p+x^{p+1}+\cdots+x^{q},\end{align*}
thus proving the theorem.
\end{proof}

It is clear by definition that 
$$v_{p,p}(n)=\begin{cases}
1,&\text{if $p\mid n$,}\\
0,&\text{if $p\nmid n$.}\end{cases}$$
Thus
$$\sum_{n=1}^{\infty}v_{p,p}(n)x^n=\sum_{n=1}^{\infty}x^{pn}=\frac{x^p}{1-x^{p}},$$
which is in accordance with Theorem~\ref{toughy}. It is also clear by definition that $v_{1,q}(n)=t_{q}(n)$, and we see that these functions have the same generating functions by Theorems~\ref{tgenf} and~\ref{toughy}.

\section{Restrictions on both the size and the number of parts}\label{sizenumber}
We consider the counting functions introduced in \S~\ref{restpart}, and classify them further by restricting the number of parts in their respective compositions. Again, recall the notation of~\eqref{acform}.

Thus, for a fixed natural number~$k$, we define $t_{q,k}(n)$ as the number of compositions of~$n$ into exactly~$k$ parts, such that for all parts~$a_j$, we have $a_j\le q$. We define $u_{q,k}(n)$ as the number of compositions of~$n$ into exactly~$k$ parts, such that for all parts~$a_j$, we have $a_j\ge q$. Finally, we define $v_{p,q,k}(n)$ as the number of compositions of~$n$ into exactly~$k$ parts, such that for all parts~$a_j$, we have $p\le a_j\le q$. 

We include here an example of each counting function.
\vskip 12pt
Here, $t_{3,4}(7)=16$:
\begin{align*}
7&=3+2+1+1 & 7&=2+2+2+1 & 7&=2+1+1+3 & 7&=1+2+2+2\\
7&=3+1+2+1 & 7&=2+2+1+2 & 7&=1+2+2+1 & 7&=1+2+1+3\\
7&=3+1+1+2 & 7&=2+1+3+1 & 7&=1+3+1+2 & 7&=1+1+3+2\\
7&=2+3+1+1 & 7&=2+1+2+2 & 7&=1+2+3+1 & 7&=1+1+2+3
\end{align*}

Here, $u_{3,5}(17)=15$:
\begin{align*}
17&=5+3+3+3+3 & 17&=3+5+3+3+3 & 17&=3+3+4+4+3 \\ 
17&=4+4+3+3+3 & 17&=3+4+4+3+3 & 17&=3+3+4+3+4 \\
17&=4+3+4+3+3 & 17&=3+4+3+4+3 & 17&=3+3+3+5+3 \\
17&=4+3+3+4+3 & 17&=3+4+3+3+4 & 17&=3+3+3+4+4 \\
17&=4+3+3+3+4 & 17&=3+3+5+3+3 & 17&=3+3+3+3+5
\end{align*}

Here, $v_{4,7,3}(16)=18$:
\begin{align*}
16&=7+6+3 & 16&=6+5+5 & 16&=5+4+7 \\ 
16&=7+5+4 & 16&=6+4+6 & 16&=4+7+5 \\
16&=7+4+5 & 16&=6+3+7 & 16&=4+6+6 \\
16&=7+3+6 & 16&=5+7+4 & 16&=4+5+7 \\
16&=6+7+3 & 16&=5+6+5 & 16&=3+7+6 \\
16&=6+6+4 & 16&=5+5+6 & 16&=3+6+7
\end{align*}

We may obtain the generating function for $t_{q,k}(n)$ first by introducing an auxiliary notation. We may expand the polynomial $(1+x+\cdots+x^{q-1})^{k}$, and write this as
\begin{equation}\label{expand}
(1+x+\cdots+x^{q-1})^{k}=\sum_{n=0}^{(q-1)k}\left[\begin{matrix}k\\n\end{matrix}\right]_{q}x^n.
\end{equation}
Thus the symbol
$$\left[\begin{matrix}k\\n\end{matrix}\right]_{q}$$
denotes the coefficient of~$x^n$ in the expansion of $(1+x+\cdots+x^{q-1})^{k}$. This, of course, gives a generalization of the binomial theorem, which is the case when $q=2$; i.e.,
$$\left[\begin{matrix}k\\n\end{matrix}\right]_{2}=\binom{k}{n}.$$
The case when $q=1$ is trivial, as both sides in~\eqref{expand} become unity and we obtain
$$\left[\begin{matrix}k\\n\end{matrix}\right]_{1}=\begin{cases}
1,&\text{if $n=0$,}\\
0,&\text{if $n>0$.}\end{cases}$$
The coefficients in~\eqref{expand} may be expressed in terms of binomial coefficients:
\begin{equation}\label{comptet}
\left[\begin{matrix}k\\n\end{matrix}\right]_{q}=\sum_{j=0}^{\left[\frac{n}{q}\right]}(-1)^{j}\binom{k}{j}\binom{k-1+n-qj}{k-1}.
\end{equation}
The formula~\eqref{comptet} may be proved several ways. Perhaps the purest combinatorial proof can be described as follows: the sought coefficient, on the left hand side of~\eqref{comptet}, is the number of ways to distribute~$n$ balls into~$k$ bins, such that no bin may contain more than~$q$ balls. We attain this number as an alternating sum, by applying the inclusion-exclusion principle. We first compute the number of ways to distribute~$n$ balls into~$k$ bins, but, with no restriction on the number of balls placed in any given bin; this number is a binomial coefficient, viz.,
$$\binom{k-1+n}{k-1}.$$
If $n<b$ we are done. If not, then we remove~$b$ balls, so that~$n-b$ balls remain. We now place the~$b$ balls into any one bin, and and distribute the remaining~$n-b$ balls among all the bins. There are 
$$\binom{k}{1}\binom{k-1+n-b}{k-1}$$
to do this, as there are $\binom{k}{1}$ ways of choosing a bin. This quantity is subtracted from our original sum. If $n\ge2q$, we must continue. We now remove~$2b$ balls from the original~$n$ balls, place~$b$ balls each into any given two bins, and distribute the remaining $n-2b$ balls into all the bins. There are $\binom{k}{2}$ ways of choosing two bins, hence we now add back
$$\binom{k}{2}\binom{k-1+n-2b}{k-1}.$$
Then, if $n\ge3b$, we must subtract
$$\binom{k}{3}\binom{k-1+n-3b}{k-1},$$
and so on, through exactly $\left[n/b\right]$ steps. This process yields the alternating sum in the right hand side of~\eqref{comptet}. Another, more analytic, way of proving~\eqref{comptet} is to rewrite~\eqref{expand} as
$$\left(\frac{1-x^{q}}{1-x}\right)^{k}=\sum_{n=0}^{(q-1)k}\left[\begin{matrix}k\\n\end{matrix}\right]_{q}x^n.$$
Applying~\eqref{zcombi},
$$\sum_{n=0}^{(q-1)k}\left[\begin{matrix}k\\n\end{matrix}\right]_{q}x^n=\left(\sum_{n=0}^{\infty}\binom{n+k-1}{k-1}x^{n}\right)
\left(\sum_{n=0}^{k}\binom{k}{n}(-1)^{n}x^{qn}\right).$$
Both polynomials in this equation may be treated formally as infinite sums; i.e., the coefficients $\left[\begin{smallmatrix}k\\n\end{smallmatrix}\right]_{q}$, resp. $\binom{k}{n}$, are zero if $n>(q-1)k$, resp. $k>n$. Then we take the Cauchy product on the right hand side, and then compare coefficients, thus obtaining~\eqref{comptet}. 

Having said all this, it suffices to express~$t_{q,k}(n)$ in terms of the coefficients introduced in~\eqref{expand}. 

\begin{theorem}\label{tqkall}
For all fixed natural numbers~$q$ and~$k$, and for all~$n$ such that $k\le n\le qk$, we have
\begin{equation}\label{thmtqkc}
t_{q,k}(n)=\left[\begin{matrix}k\\n-k\end{matrix}\right]_{q}.
\end{equation}
Otherwise $n<k$ or $n>qk$, whence $t_{q,k}(n)=0$. Furthermore, the generating function for $t_{q,k}(n)$ is
\begin{equation}\label{thmtqksecond}
f(x)=\sum_{n=k}^{qk}t_{q,k}(n)=x^{k}\left(1+x+\cdots+x^{q-1}\right)^{k}.
\end{equation}
\end{theorem}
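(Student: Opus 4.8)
The plan is to reduce $t_{q,k}(n)$ to a coefficient-extraction problem via a shift-by-one bijection. Given a composition $n = a_1 + a_2 + \cdots + a_k$ counted by $t_{q,k}(n)$, every part obeys $1 \le a_j \le q$; putting $b_j = a_j - 1$ produces a $k$-tuple $(b_1, \ldots, b_k)$ with $0 \le b_j \le q - 1$ and $b_1 + \cdots + b_k = n - k$, and this map is a bijection onto the set of all such tuples. Hence $t_{q,k}(n)$ is the number of ordered $k$-tuples from $\{0, 1, \ldots, q-1\}$ summing to $n - k$. But that is exactly the coefficient of $x^{n-k}$ in $(1 + x + \cdots + x^{q-1})^k$: each way of choosing one monomial $x^{b_j}$ from each of the $k$ factors contributes $1$ to the coefficient of $x^{b_1 + \cdots + b_k}$. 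By the definition \eqref{expand}, that coefficient is $\left[\begin{smallmatrix}k\\n-k\end{smallmatrix}\right]_q$, which proves \eqref{thmtqkc}.

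The range statement then comes for free. The polynomial $(1 + x + \cdots + x^{q-1})^k$ has lowest-degree term $x^0$ (all $b_j = 0$, i.e. all parts equal $1$, forcing $n = k$) and highest-degree term $x^{(q-1)k}$ (all $b_j = q-1$, i.e. all parts equal $q$, forcing $n = qk$). Therefore $\left[\begin{smallmatrix}k\\n-k\end{smallmatrix}\right]_q = 0$ exactly when $n - k < 0$ or $n - k > (q-1)k$, that is, when $n < k$ or $n > qk$; since $t_{q,k}(n)$ is obviously $0$ in those ranges too (a sum of $k$ positive parts bounded by $q$ lies between $k$ and $qk$), this matches. For the generating function, simply sum the closed form: $f(x) = \sum_{n=k}^{qk} t_{q,k}(n) x^n = x^k \sum_{m=0}^{(q-1)k} \left[\begin{smallmatrix}k\\m\end{smallmatrix}\right]_q x^m = x^k (1 + x + \cdots + x^{q-1})^k$, the last equality again being \eqref{expand}. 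This gives \eqref{thmtqksecond}.

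There is no real obstacle here once \eqref{expand} is available, since everything reduces to bookkeeping; the only thing to be careful about is stating the shift bijection cleanly and checking that the extreme degrees of the polynomial correspond precisely to the endpoints $n = k$ and $n = qk$. One could instead imitate the earlier sections by conditioning on the first part to obtain a recursion for $t_{q,k}(n)$ and then solving the resulting relation among the generating functions $f_k(x)$, but that route is longer and offers no advantage, so I would not take it.
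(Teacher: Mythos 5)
Your proof is correct and follows essentially the same route as the paper: the shift $b_j=a_j-1$ identifying compositions counted by $t_{q,k}(n)$ with choices of one monomial from each of the $k$ factors of $(1+x+\cdots+x^{q-1})^k$, hence with the coefficient $\left[\begin{smallmatrix}k\\n-k\end{smallmatrix}\right]_q$, followed by summing over $n$ to read off the generating function, is exactly the paper's argument (stated in the reverse direction of the bijection). The only addition is your explicit degree-range check, which the paper handles by noting $t_{q,k}(n)=0$ outside $k\le n\le qk$ directly from the definition.
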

\begin{proof}
We begin by considering the quantity
\begin{equation}\label{auxtqkc}
\left[\begin{matrix}k\\n-k\end{matrix}\right]_{q},
\end{equation}
that is, the coefficient of $x^{n-k}$ in the expansion of $\left(1+x+\cdots+x^{q-1}\right)^{k}$. To compute this quantity, we must count all possible ways of choosing one term, of the form $x^{b}$, from each of the~$k$ identical factors $1+x+\cdots+x^{q-1}$, and then multiply these terms together, thus obtaining an expression of the form
$$x^{b_1}x^{b_2}\cdots x^{b_k}=x^{b_1+b_2+\cdots+b_k},$$
where $b_1+b_2+\cdots+b_k=n-k$ and $0\le b_j\le q-1$, $1\le j\le k$. Then,
$$n=a_1+a_2+\cdots+a_k, \qquad 1\le a_{j}\le q,\qquad 1\le j\le k.$$
Hence the coefficient~\eqref{auxtqkc} counts the number of compositions of~$n$ into~$k$ parts, each part~$a$ having the property $a\le q$. This proves~\eqref{thmtqkc}. By definition it is clear that $t_{q,k}=0$ if $n<k$ or $n>qk$. 

Finally, by~\eqref{expand} and~\eqref{thmtqkc} we have
$$f(x)=\sum_{n=k}^{qk}\left[\begin{matrix}k\\n-k\end{matrix}\right]_{q}x^{n}=x^{k}\sum_{n=0}^{(q-1)k}\left[\begin{matrix}k\\n
\end{matrix}\right]_{q}x^{n}=x^{k}\left(1+x+\cdots+x^{q-1}\right)^{k},$$
thus proving~\eqref{thmtqksecond}.
\end{proof}

\begin{theorem}\label{uqkbigres}
For all fixed natural numbers~$q$ and~$k$, and for all~$n$ such that $n\ge qk$, we have
\begin{equation}\label{pluginhere}
u_{q,k}(n)=\binom{n-qk+k-1}{k-1},
\end{equation}
otherwise $n<qk$ and $u_{q,k}(n)=0$.

Furthermore, the generating function for $u_{q,k}(n)$ is
\begin{equation}\label{plugin2}
f(x)=\sum_{n=qk}^{\infty}u_{q,k}(n)x^n=\frac{x^{qk}}{\left(1-x\right)^{k}}.
\end{equation}
\end{theorem}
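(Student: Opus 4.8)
The plan is to follow the same two-pronged strategy used for $s_{r,m,k}(n)$ in \S~\ref{rmksec}: first establish the generating function~\eqref{plugin2}, and then read off the closed form~\eqref{pluginhere} by matching coefficients against the identity~\eqref{zcombi}. The cleanest route to~\eqref{plugin2} is a direct bijective substitution rather than a recurrence. Given a composition $n=a_1+a_2+\cdots+a_k$ counted by $u_{q,k}(n)$, set $b_j=a_j-q$ for each $j$; since every part satisfies $a_j\ge q$, each $b_j$ is a nonnegative integer, and $b_1+b_2+\cdots+b_k=n-qk$. Conversely, any ordered $k$-tuple of nonnegative integers summing to $n-qk$ yields, by adding $q$ to each entry, a composition counted by $u_{q,k}(n)$. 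Hence $u_{q,k}(n)$ equals the number of ordered $k$-tuples $(b_1,\dots,b_k)$ of nonnegative integers with $\sum_j b_j=n-qk$; in particular $u_{q,k}(n)=0$ when $n<qk$, as then $n-qk<0$.

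From here the generating function is immediate. The generating function for a single nonnegative part $b_j$ is $\sum_{b\ge 0}x^b=1/(1-x)$, so the generating function for the $k$-tuple $(b_1,\dots,b_k)$, with total weight recorded by $x$, is $1/(1-x)^k$; reinstating the shift by $qk$ multiplies this by $x^{qk}$, which gives $f(x)=x^{qk}/(1-x)^k$, proving~\eqref{plugin2}. (Alternatively, one can mirror Theorems~\ref{urec} and~\ref{ugenf} almost verbatim: splitting on whether $a_1=q$ or $a_1>q$ gives the recurrence $u_{q,k}(n)=u_{q,k}(n-1)+u_{q,k-1}(n-q)$, whence the generating functions satisfy $f_k(x)=\frac{x^q}{1-x}f_{k-1}(x)$, and induction from $f_1(x)=x^q/(1-x)$ yields the same closed form; I would relegate this to a remark.)

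Finally, to extract~\eqref{pluginhere}, apply~\eqref{zcombi} with $z=x$ to expand $1/(1-x)^k=\sum_{j\ge 0}\binom{j+k-1}{k-1}x^j$, so that
$$f(x)=\frac{x^{qk}}{(1-x)^k}=\sum_{j=0}^{\infty}\binom{j+k-1}{k-1}x^{j+qk}.$$
Setting $n=j+qk$ and comparing the coefficient of $x^n$ on both sides gives $u_{q,k}(n)=\binom{n-qk+k-1}{k-1}$ for every $n\ge qk$, and $u_{q,k}(n)=0$ otherwise, exactly as in the proof of Theorem~\ref{exact}.

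There is no genuine obstacle here; the only points requiring a moment's care are the boundary case $n<qk$, which the substitution handles automatically since a sum of nonnegative integers cannot be negative, and the degenerate case $k=1$, where the formula correctly reduces to $u_{q,1}(n)=\binom{n-q}{0}=1$ for $n\ge q$, matching the obvious fact that $n=n$ is then the unique such composition. Both the substitution argument and the coefficient extraction are routine, so the write-up should be short.
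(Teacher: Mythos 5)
Your proof is correct and is essentially the paper's argument: both rest on translating each part to reduce to the unrestricted case (the paper shifts by $q-1$ to land on $u_{1,k}=s_{1,1,k}$ and invokes Theorem~\ref{exact}, while you shift by $q$ and count nonnegative tuples directly), and both use~\eqref{zcombi} to pass between the binomial closed form and $x^{qk}/(1-x)^k$. The only difference is the order — the paper derives~\eqref{pluginhere} first and then the generating function, whereas you go the other way — which is immaterial.
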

\begin{proof}
In the notation of~\eqref{acform}, let $n=a_1+a_2+\cdots+a_k$ be a composition of~$n$ that is counted by $u_{q,k}(n)$, so that $a_k\ge q$ for all~$j$, $0\le j\le k$. Now let $b_j=a_j-(q-1)$, $1\le j\le k$. Thus $b_j\ge1$, $1\le j\le k$, hence,
$$
n-k(q-1)=b_1+b_2+\cdots+b_k.
$$
Thus 
$$
u_{q,k}(n)=u_{1,k}\left(n-k(q-1)\right).
$$
By definition, recalling the notation of \S~\ref{rmksec},
$$
u_{1,k}(n-k(q-1))=s_{1,1,k}\left(n-k(q-1)\right),
$$
hence \eqref{pluginhere} follows by Theorem~\ref{exact}. By definition, it is clear that $u_{k,q}(n)=0$ if $n<qk$. 

We may now obtain~\eqref{plugin2} by applying~\eqref{pluginhere} and~\eqref{zcombi}:
$$f(x)=\sum_{n=qk}^{\infty}\binom{n-qk+k-1}{k-1}x^n
=x^{qk}\sum_{n=0}^{\infty}\binom{n+k-1}{k-1}x^n
=\frac{x^{qk}}{(1-x)^{k}}.$$
\end{proof}

\begin{theorem}\label{vpqfin}
For all fixed natural numbers~$p$, $q$ and~$k$, such that $p\le q$, and for all~$n$ such that $pk\le n\le qk$, we have
\begin{equation}\label{lastplug}
v_{p,q,k}(n)=\left[\begin{matrix}k\\n-pk\end{matrix}\right]_{q-p+1}.
\end{equation}
Otherwise either $n<pk$ or $n>qk$, whence $v_{p,q,k}(n)=0$. Furthermore, the generating function for $v_{p,q,k}(n)$ is
\begin{equation}\label{lastplug22}
f(x)=\sum_{n=pk}^{qk}v_{p,q,k}(n)x^{n}=x^{pk}\left(1+x+\cdots+x^{q-p}\right)^{k}.
\end{equation}
\end{theorem}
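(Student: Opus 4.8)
The plan is to reduce $v_{p,q,k}(n)$ to the case of $t_{q,k}(n)$ already settled in Theorem~\ref{tqkall}, by the same shift-of-variables trick used in the proof of Theorem~\ref{uqkbigres}. Given a composition $n = a_1 + \cdots + a_k$ counted by $v_{p,q,k}(n)$, with $p \le a_j \le q$ for each $j$, I would set $b_j = a_j - (p-1)$; then $1 \le b_j \le q-p+1$ and $\sum_j b_j = n - (p-1)k$, so that
$$n - (p-1)k = b_1 + b_2 + \cdots + b_k$$
is a composition of $n - (p-1)k$ into exactly $k$ parts, each at most $q - p + 1$. Since the map $a_j \mapsto b_j$ is a bijection with inverse $b_j \mapsto b_j + (p-1)$, this gives $v_{p,q,k}(n) = t_{q-p+1,\,k}\bigl(n - (p-1)k\bigr)$.

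Next I would invoke Theorem~\ref{tqkall} with $q$ replaced by $q-p+1$ and $n$ replaced by $n - (p-1)k$. The admissibility condition $k \le n - (p-1)k \le (q-p+1)k$ rearranges to $pk \le n \le qk$, and on that range Theorem~\ref{tqkall} yields
$$v_{p,q,k}(n) = \left[\begin{matrix}k\\ n - (p-1)k - k\end{matrix}\right]_{q-p+1} = \left[\begin{matrix}k\\ n - pk\end{matrix}\right]_{q-p+1},$$
which is~\eqref{lastplug}. For $n < pk$ or $n > qk$, the bound $p \le a_j \le q$ on all $k$ parts makes a composition impossible, so $v_{p,q,k}(n) = 0$ directly from the definition.

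Finally, for the generating function~\eqref{lastplug22}, the shortest path is to multiply the generating function~\eqref{thmtqksecond} of $t_{q-p+1,k}$ by $x^{(p-1)k}$, since $v_{p,q,k}(n) = t_{q-p+1,k}(n - (p-1)k)$ termwise:
$$f(x) = x^{(p-1)k} \cdot x^{k}\bigl(1 + x + \cdots + x^{q-p}\bigr)^{k} = x^{pk}\bigl(1 + x + \cdots + x^{q-p}\bigr)^{k}.$$
Alternatively one can see this directly: each of the $k$ parts ranges over $\{p, p+1, \ldots, q\}$ and so contributes a factor $x^p + x^{p+1} + \cdots + x^q = x^p(1 + x + \cdots + x^{q-p})$, and the product over the $k$ positions gives the same expression. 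I do not expect a genuine obstacle here; the only point requiring care is the bookkeeping of the shift constant $p - 1$ (chosen so the new parts are $\ge 1$ rather than $\ge 0$) and the accompanying translation of the range of $n$ from $[\,k,\,(q-p+1)k\,]$ back to $[\,pk,\,qk\,]$.
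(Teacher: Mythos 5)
Your proposal is correct and rests on the same idea as the paper's proof: a shift of each part (the paper uses $a_j=b_j+p$, you use $b_j=a_j-(p-1)$) identifying compositions counted by $v_{p,q,k}(n)$ with the coefficient of $x^{n-pk}$ in $\left(1+x+\cdots+x^{q-p}\right)^{k}$. The only difference is packaging: you reduce cleanly to Theorem~\ref{tqkall} via $v_{p,q,k}(n)=t_{q-p+1,k}\bigl(n-(p-1)k\bigr)$ (in the spirit of the paper's proof of Theorem~\ref{uqkbigres}), whereas the paper rederives the coefficient-counting bijection directly, and your bookkeeping of the range and of the generating-function shift is accurate.
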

\begin{proof}
This theorem merely generalizes Theorem~\ref{tqkall}, hence the proof is similar. The coefficient of $x^{n-pk}$ in the expansion of $\left(1+x+\cdots+x^{q-p}\right)^{k}$ is as given in the right hand side of~\eqref{lastplug}, 
and hence is computed by counting all possible ways of choosing one term, of the form $x^{b}$, from each of the~$k$ identical factors $1+x+\cdots+x^{q-p}$, and then multiplying these terms together to obtain an expression of the form
$$x^{b_1}x^{b_2}\cdots x^{b_k}=x^{b_1+b_2+\cdots+b_k},$$
where $b_1+b_2+\cdots+b_k=n-kp$ and $0\le b_j\le q-p$, $1\le j\le k$. Again, letting $a_{j}=b_j+p$, we have
$$n=a_1+a+2+\cdots+a_k,\qquad p\le a_j\le q,\qquad1\le j\le k.$$
This proves~\eqref{lastplug}. It is clear by definition that $v_{p,q,k}(n)=0$ if $n<pk$ or $n>qk$.

Finally, by~\eqref{expand} and~\eqref{lastplug} we have
\begin{multline*}
f(x)=\sum_{n=pk}^{qk}\left[\begin{matrix}k\\n-pk\end{matrix}\right]_{q-p+1}x^{n}
=x^{pk}\sum_{n=0}^{(q-p)k}
\left[\begin{matrix}k\\n\end{matrix}\right]_{q-p+1}x^{n}\\
=x^{pk}\left(1+x+\cdots+x^{q-p}\right)^{k},
\end{multline*}
thus proving~\eqref{lastplug22}.
\end{proof}

We leave with three examples from this Section:
\begin{align*}
t_{8,11}(44)&=346718362\\
u_{6,10}(84)&=10015005\\
v_{5,11,13}(86)&=233197198\end{align*}

\section{Concluding remarks}\label{end}
As this note is purely expository, the results presented herein are known within one context or another. Heubach and Mansour~\cite{heuman} have counted the number of compositions of~$n$ such that the parts belong to a given set~$A$ of natural numbers. For example, in the notation used in \S~\ref{restpart}, $v_{4,11}(n)$ counts the number of compositions of~$n$ with parts belonging to the set
$$A=\{4,\,5,\,6,\,7,\,8,\,9,\,10,\,11\}.$$
In the notation used in \S~\ref{recerrm}, $s_{3,7}(n)$ counts the number of compositions of~$n$ with parts belonging to the set
$$A=\{3,\,10,\,17,\,24,\,31,\,38,\,45,\,52,\,\dots\}=\left\{7n+3\right\}_{n\ge0}.$$
This note's purpose is to introduce the reader to compositions, and to some elementary methods used to count them. We suggest the paper of Heubach and Mansour~\cite{heuman} for a deeper perspective into this topic. As a {\tt .pdf} file, it may be downloaded directly from the {\it Wikipedia\/} page for compositions. Be aware, however, that these authors include~0 in the domain of the counting functions of the compositions, whereas in this note, the domain is strictly the set of natural numbers. This produces generating functions that differ slightly from ours. 

Malandro~\cite{malandro} has given an extensive analysis of the compositions counted by~$t_q(n)$ in \S~\ref{restpart}.

In this note, we did not count compositions of~$n$ into distinct parts. Richmond and Knopfmacher~\cite{richmond} have provided detailed analysis on this topic. 

There are many fine textbooks in combinatorics and discrete mathematics that discuss the topic of generating functions. Perhaps the most extensive treatment of this topic is by Wilf~\cite{wilf}. The work cited herein is the third edition, but the author was able, quite easily, to download the second edition as a {\tt .pdf} file via an internet search.

Naturally, we also recommend the {\it OEIS\/}~\cite{oeis} as a valuable resource for integer sequences in general, and as an excellent source for references.

\vskip 24pt\noindent
{\sc University of the Virgin Islands}\\
{\sc 2 John Brewers Bay}\\
{\sc St. Thomas VI 00802}\\
{\sc USA}\\
{\tt diannuc@uvi.edu}

\begin{thebibliography}{9}

\bibitem{heuman}
S. Heubach and T. Mansour, ``Compositions of $n$ with parts in a set,''
{\it Congressus Numerantium\/}, {\bf 168} (2004) 127--143. 

\bibitem{malandro}
M. Malandro, ``Integer compositions with part sizes not exceeding~$k$,'' {\tt arXiv:1108.0337v2} (2012).

\bibitem{richmond}
B. Richmond and A. Knopfmacher, ``Compositions with distinct parts,'' {\it Aequationes Mathematicae\/}, {\bf 49} (1995) 86--997.

\bibitem{oeis} 
N. Sloane, {\it The On-line Encyclopedia of Integer Sequences\/}, {\tt oeis.org}

\bibitem{wilf}
H. Wilf, {\it Generatingfunctionology\/}, (3rd ed.), A K Peters, Ltd. (2006).

\end{thebibliography}
\end{document}